\def\opn#1#2{\def#1{\operatorname{#2}}} 
\opn\Ker{Ker}
\opn\Im{Im}
\opn\rank{rank}
\opn\ini{in}
\opn\PF{PF}
\opn\Fr{F}
\opn\Ap{Ap}
\opn\K{K}
\opn\Soc{Soc}
\opn\type{type}
\opn\supp{supp}
\opn\mult{mult}
\opn\cone{cone}
\opn\relint{relint}
\opn\Ann{Ann}
\def\AGG{almost Gorenstein graded }
\newcommand{\Z}{\mathbb Z}
\newcommand{\N}{\mathbb N}
\newcommand{\Q}{\mathbb Q}
\def\e{\boldsymbol{e}}
\def\mm{\mathfrak m}
\def\qq{\mathfrak q}
\def\aa{\mathfrak a}
\def\bb{\mathfrak b}
\newcommand{\fd}{k}
\newcommand{\wdef}[1]{\textit{#1}}
\newtheorem{Theorem}{Theorem}[section]
\newtheorem{thm}[Theorem]{Theorem}
\newtheorem{prop}[Theorem]{Proposition}
\newtheorem{lem}[Theorem]{Lemma}
\newtheorem{cor}[Theorem]{Corollary}
\newtheorem*{claim}{Claim} 
\theoremstyle{definition}
\newtheorem{rem}[Theorem]{Remark}
\newtheorem{ex}[Theorem]{Example}
\newtheorem{defn}[Theorem]{Definition}
\newtheorem{note}[Theorem]{Notation}
\begin{document}

\title{Almost Gorenstein simplicial semigroup rings}

\author{Kazufumi Eto}
\address{Department of Mathematics, Nippon Institute of Technology, 
Miyashiro, Saitama, Japan 345-8501
}
\email{etou@nit.ac.jp}
\author{Naoyuki Matsuoka}
\address{Department of Mathematics, School of Science and Technology, Meiji University,
1-1-1 Higashi-mita, Tama-ku, Kawasaki, Japan 214-8571
}
\email{naomatsu@meiji.ac.jp}
\author{Takahiro Numata}
\address{Department of Mathematics, Kanagawa Institute of Technology, 
1030 Shimo-ogino, Atsugi, Kanagawa, Japan 243-0292
}
\email{takahiro.n0913@gmail.com}
\author{Kei-ichi Watanabe}
\address{Department of Mathematics, College of Humanities and Sciences,
Nihon University, Setagaya-ku, Tokyo, 156-8550, Japan and Organization
for the Strategic Coordination of Research and Intellectual Properties, Meiji
University}
\email{watnbkei@gmail.com}

\date{\today}
\thanks{Kei-ichi Watanabe was partially supported by JSPS Grant-in-Aid for Scientific Research (C) 23K03040}
%
\keywords{simplicial semigroup, almost Gorenstein,
normal semigroup, Ulrich module}

\subjclass[2020]{Primary 13F65; Secondary 13C14, 13A02, 20M25}

\begin{abstract}
We give a criterion for almost Gorenstein property 
for semigroup rings associated with simplicial semigroups. 
We extend Nari's theorem for almost symmetric numerical semigroups 
to simplicial semigroups with higher rank. 
By this criterion, we determine $2$-dimensional normal semigroup rings
which have ``Ulrich elements'' defined in \cite{HJS}. 
\end{abstract}


\maketitle

\markboth{K. Eto, N. Matsuoka, T. Numata and K. -i. Watanabe}%
{Almost Gorenstein
simplicial semigroup rings}

\section{Introduction}

The concept of almost Gorenstein rings, a notable extension of Gorenstein rings in the field of commutative algebra, has evolved significantly since its inception. 
Initially introduced in 1997 by Barucci and Fr\"{o}berg \cite{BF}, the definition was specifically designed for one-dimensional analytically unramified local rings, based on a detailed analysis of almost symmetric numerical semigroups. 
In 2013, Goto, Matsuoka, and Phuong \cite{GMP} proposed a broader definition for almost Gorenstein properties in arbitrary one-dimensional Cohen-Macaulay local rings, equivalent to the definition by Barucci-Fr\"{o}berg \cite{BF} under the assumption that the base ring is analytically unramified.
The framework was further extended in 2015 by Goto, Takahashi, and Taniguchi \cite{GTT} to apply to Cohen-Macaulay local/graded rings of general dimensions. 
 In contrast, almost Gorenstein properties are defined by the embedding of the base ring $R$ within the canonical module $\K_R$ of $R$, where the quotient module $\K_R / fR$ becomes controllable known as an Ulrich module for some $f \in \K_R$.
In the case of graded rings, this definition is adapted to involve the graded canonical module. 
The precise definitions of the concepts will be given later in this paper.

In what follows, let $k$ be a field. Considering the foundation of almost Gorenstein rings lies in the theory of numerical semigroup rings as established by Barucci and Fr\"oberg \cite{BF}, it becomes a natural question to ask when higher-dimensional semigroup rings become almost Gorenstein. 
Herzog, Jafari, and Stamate \cite{HJS} have provided sufficient conditions for the almost Gorenstein property in semigroup rings associated with normal simplicial affine semigroups. 
More precisely, they have defined Ulrich elements for a semigroup $H$, and presented a necessary and sufficient condition for the existence of these elements. 
The existence of an Ulrich element $u$ in $H$ leads to the fact that $R=\fd[H]$ is almost Gorenstein, because $K_R / t^u R$ is an Ulrich module.

In this paper, we generalize their result to general (not necessarily normal) 
simplicial affine semigroup rings.
Furthermore, we will give a criterion for almost Gorenstein property via symmetry of the generators 
of the canonical module,  which is a natural generalization of Nari's criterion \cite{Nari13}, 
for almost Gorenstein property of numerical semigroup rings. 
Using this criterion, we completely determine the 2-dimensional normal semigroups which have an 
Ulrich element in Theorem 7.2.

As an application of our results, we are able to determine when Ulrich elements exist in the case of two-dimensional normal affine semigroups. 
Namely, if $\fd[H]$ is the 
invariant subring of the cyclic group generated by 
$\begin{pmatrix} \xi & 0 \\ 0 & \xi^{m} \end{pmatrix}$, where $\xi$ is a primitive $n$-the root of 
unity and $m(<n)$ is a positive integer relatively prime to $n$, then $\fd[x,y]^G$ has an Ulrich element 
if and only if either $m=1$ or the Hirzebruch-Jung continued fraction expansion 
of $\dfrac{n}{m}$ is of the form $[[q+1, \underbrace{2, \dots, 2}_{c-1}, p+1]]$.


In the following, we will outline the structure and key contributions of this paper. 

In Section 2, we begin by revisiting the definitions of simplicial semigroups and associated concepts. This includes establishing the Ap\'{e}ry set and socle elements, along with a review of the definition of the type of a semigroup. 
A crucial concept for this paper, the orthogonal semigroup, is defined also in Section 2. 
We demonstrate that any simplicial semigroup can be transformed into an orthogonal semigroup through equivalent deformations. 
Additionally, we describe the normality of orthogonal semigroups in terms of the Ap\'{e}ry set.

Section 3 is dedicated to recalling the definition of the Hilbert series of graded modules over graded rings and reminding the concept of multiplicity. 
We characterize the Cohen-Macaulay property of the semigroup ring $\fd[H]$ of a simplicial semigroup $H$ using the Hilbert series and multiplicity. 
Moreover, for the case where $R = \fd[H]$ is Cohen-Macaulay, we provide an evaluation of the multiplicity of the residue module $\K_R / fR$ of the graded canonical module $\K_R$, where $f \in \K_R$ is a homogeneous element.

In Section 4, we review the definition of almost Gorenstein properties for Cohen-Macaulay graded rings with a few comments on almost Gorenstein local rings and introduce the concept of AG semigroups. 
The section explores the contrast and necessary and sufficient conditions between the AG property of a semigroup and the almost Gorenstein property of its semigroup ring. 
We further characterize the AG property of a semigroup using socle elements, extending the results of Nari. This section is enriched with numerous concrete examples.

\section{Simplicial semigroups}

We first introduce notations and definitions for semigroups.
In this paper, we use the term `semigroup' to refer specifically to what is traditionally known as a `monoid'. This means that our semigroups are not only closed under addition but also contain the identity element $0$. 

Let $\Z$ denote the ring of integers and $\N$ the set of nonnegative integers. 
Consider $H$ as a commutative semigroup in this sense. Throughout this paper, we always assume that $H$ is cancellative. Hence the Grothendieck group $G(H)$ generated by $H$ exists and $G(H)$ is abelian. 
Note that $\N$ acts on $H$ in the usual way, typically by addition. The \wdef{rank} of $H$, denoted by $\rank H$, is defined as the rank of $G(H)$ as a $\Z$-module. We define a partial order $\leq_H$ on $H$ as follows:
\[
h\leq_H h'
\quad\Longleftrightarrow\quad h'-h\in H,
\qquad
\text{for } h, h'\in G(H).
\]

\begin{defn}\label{relint}
Let $H$ be a semigroup.
If $H$ is isomorphic to a subsemigroup in $\Z^d$
for some $d>0$,
then it is called an \wdef{affine} semigroup.
We always assume that 
an affine semigroup is \wdef{positive},
equivalently
it is isomorphic to a subsemigroup in $\N^d\subset\Z^d$.
\end{defn}

\begin{defn}
Let $H$ be an affine semigroup of rank $d > 0$. 
We say that $H$ is \wdef{simplicial} if there exist $b_1, b_2, \dots, b_d \in H$ and $n > 0$ such that $nH \subset \sum_{i=1}^d \N b_i$, 
where $nH = \{nh \,|\, h \in H\}$. 
When this is the case, $b_1, b_2, \dots, b_d$ is said to be the \wdef{extreme rays} of $H$. 
\end{defn}

When $H$ is a simplicial semigroup of rank $d$, 
we may assume $H\subset\Z^d$ 
due to the isomorphism $G(H)\cong\Z^d$ as groups.
Note that an affine semigroup need not be simplicial in general.

\begin{note} Let $H$ be an affine semigroup.
\begin{enumerate}
\item 
We say that  $(H, E)$ is a simplicial semigroup if $H$ is a simplicial 
semigroup and $E$ is the set of all extreme rays of $H$. 
Notice that 
$\N E = \sum_{h \in E} \N h$ is a subsemigroup of $H$ 
and the semigroup ring $\fd[\N E]$ is a polynomial ring in $d$ variables.
\item 
We denote $H$ by a matrix whose column vectors generate $H$.
For example, 
$H=\begin{pmatrix}2 & 0 \\ 0 & 3\end{pmatrix}$
means that $H$ is a semigroup in $\Z^2$ 
generated by $\binom20$ and $\binom03$.
\item 
For $h\in\Z^d$,
we denote the $i$-th entry of $h$ by $\sigma_i(h)$
for each $i$.
\end{enumerate}
\end{note}

\begin{ex}
Let 
\[
H=
\begin{pmatrix}
1 & 1 & 0 & 0 \\
1 & 0 & 1 & 0 \\
0 & 1 & 0 & 1 \\
0 & 0 & 1 & 1
\end{pmatrix}.
\]
Then $H$ is an affine semigroup but not simplicial,
since $H$ possesses four extreme rays
and the rank of $H$ is three.
\end{ex}

We recall the definition of the Ap\'ery set.

\begin{defn}\label{type} Let $H$ be an affine semigroup and
$E$ an arbitrary subset of $H$.
\begin{enumerate}
\item We define the \wdef{Ap\'ery set} of $E$ in $H$ as follow:
\[
\Ap(H, E)=\{ h\in H \,|\, h-b\notin H\text{ for each }b\in E\}.
\]
\item Let $\Soc(H,E)$ denote the set of all maximal elements in $\Ap(H,E)$ 
with respect to the order $\leq_H$.
When $(H, E)$ is a simplicial semigroup,
$\Ap(H, E)$ is finite and the cardinality of $\Soc(H, E)$ is called the \wdef{type} of $H$, denoted by $\type H$
(\cite[Definition 3.1]{JY}).
\item Note that, for every $h\in G(H)$, there exists $v\in\Ap(H, E)$ such that $h-v\in\Z E$.
In fact, if $h\in H$, this is clear by definition.
On the other hand, if $h\notin H$, we can choose $h_1, h_2\in H$ with $h=h_1-h_2$, and there exists $c>0$ such that $ch_2\in\N E$. 
Consequently, we have $h+ch_2=h_1+(c-1)h_2\in H$, establishing the existence of $v \in \Ap(H,E)$ as explained above.
\item 
For $h\in H$ and $v\in\Ap(H, E)$,
we denote $v = h^{\vee}$ if  $h + v\in\Z E$.
Note that $h+ h^{\vee}\in\N E$ and $0^{\vee}=0$. 
It might not be unique in general.
\end{enumerate}
\end{defn}

\begin{ex}
Let 
\[
H=
\begin{pmatrix}
2 & 0 & 0 & 1 & 1 \\
0 & 2 & 0 & 1 & 0 \\
0 & 0 & 2 & 0 & 1 
\end{pmatrix}
\quad\text{and}\quad
E=
\begin{pmatrix}
2 & 0 & 0 \\
0 & 2 & 0 \\
0 & 0 & 2 
\end{pmatrix},
\]
i.e., 
$H$ is a semigroup
generated by the column vectors of the matrix $\begin{pmatrix}
2 & 0 & 0 & 1 & 1 \\
0 & 2 & 0 & 1 & 0 \\
0 & 0 & 2 & 0 & 1 
\end{pmatrix}
$
and $E$ is a set
consisting of the column vectors of $\begin{pmatrix}
2 & 0 & 0 \\
0 & 2 & 0 \\
0 & 0 & 2 
\end{pmatrix}$.
Then
\[
\Ap(H, E)
=\left\{
\begin{pmatrix}0\\0\\0\end{pmatrix},
\begin{pmatrix}1\\1\\0\end{pmatrix},
\begin{pmatrix}1\\0\\1\end{pmatrix},
\begin{pmatrix}2\\1\\1\end{pmatrix}
\right\},~
\Soc(H, E)
=\left\{
\begin{pmatrix}2\\1\\1\end{pmatrix}
\right\}.
\]
\end{ex}

For $d > 0$ and each $1 \le i \le d$, 
$\e_i \in \Z^d$ denotes the column vector 
where the $i$-th component is $1$ and all other components are $0$.

\begin{defn}
Let $(H, E)$ be a simplicial semigroup of rank $d$ and $m > 0$ a positive integer.
If the following two conditions are satisfied,
we say that $H$ is an \wdef{orthogonal} semigroup of order $m$:
\begin{enumerate}
\item
$H\subset\Z^d=\bigoplus_{i=1}^d\Z\e_i$ and
$E=\{m\e_i\}_{i=1, \dots, d}$,
\item
if $0<m'<m$, then $m'\e_i\notin H$ for each $1 \le i \le d$.
\end{enumerate}
\end{defn}

\begin{lem}\label{orth_semi}
If $H$ is a simplicial semigroup,
then there exists an orthogonal semigroup $H'$
isomorphic to $H$ as a semigroup.
\end{lem}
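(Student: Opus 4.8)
The plan is to realize the orthogonal model of $H$ by a single $\Q$-linear isomorphism of $\Q^d=G(H)\otimes_\Z\Q$ that straightens the extreme rays onto the coordinate axes and clears denominators by one common factor. First I would normalize the extreme rays: since $H$ is positive, for each extremal direction the set $H\cap\R_{\ge0}b_i$ is a nonzero rank-one subsemigroup of $G(H)$, hence is generated by a unique minimal element, and I replace $b_i$ by this minimal generator. Because $H$ has rank $d$ and $nH\subset\sum_i\N b_i$, the vectors $b_1,\dots,b_d$ are $\Q$-linearly independent and thus form a $\Q$-basis of $\Q^d$; let $\lambda_1,\dots,\lambda_d$ be the dual basis, i.e.\ the $\Q$-linear functionals with $\lambda_i(b_j)=\delta_{ij}$, so that every $h$ is written uniquely as $h=\sum_i\lambda_i(h)b_i$, with $\lambda_i(h)\ge0$ whenever $h\in H$.

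Next I would fix the common scaling. Each image $\lambda_i(G(H))$ is a finitely generated subgroup of $\Q$ containing $\Z$ (as $\lambda_i(b_i)=1$), hence equals $\tfrac1{n_i}\Z$ for a well-defined positive integer $n_i$; I set $m=\operatorname{lcm}(n_1,\dots,n_d)$ and let $\phi\colon G(H)\to\Q^d$ be the restriction of the $\Q$-linear isomorphism $\phi_\Q\colon\Q^d\to\Q^d$ determined by $\phi_\Q(b_i)=m\e_i$, so that $\phi(h)=\bigl(m\lambda_1(h),\dots,m\lambda_d(h)\bigr)$. The choice of $m$ guarantees $m\lambda_i(h)\in\Z$ for all $h\in G(H)$, so $\phi$ lands in $\Z^d$; being the restriction of a linear isomorphism it is an injective group homomorphism, hence a semigroup isomorphism onto $H':=\phi(H)\subset\Z^d$.

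Finally I would verify that $H'$ is orthogonal of order $m$. Condition (1) is immediate since $\phi(b_i)=m\e_i$ and $\phi_\Q$ carries $\cone(H)$ isomorphically onto $\cone(H')$, sending the extremal direction $\Q_{\ge0}b_i$ to $\Q_{\ge0}\e_i$; so the extreme rays of $H'$ are the minimal positive elements of $H'$ lying on the axes. For condition (2), suppose $m'\e_i\in H'$ with $0<m'<m$; applying $\phi_\Q^{-1}$ gives $h=\tfrac{m'}{m}b_i\in H$, a nonzero element strictly below $b_i$ on the ray $\R_{\ge0}b_i$, contradicting the minimality built into $b_i$. The same computation shows $m\e_i=\phi(b_i)$ is exactly the smallest positive multiple of $\e_i$ in $H'$, so $E'=\{m\e_i\}$ as required. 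The main obstacle is securing a single uniform order $m$ \emph{together with} the minimality in (2): integrality forces $n_i\mid m$ for every $i$, while (2) forbids any proper multiple of $\e_i$ below $m\e_i$. Both constraints are reconciled by taking each $b_i$ minimal and $m$ the least common multiple of the denominators $n_i$, so that $\phi$ clears all denominators simultaneously without creating spurious lattice points on the axes.
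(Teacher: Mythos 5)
Your proof is correct and follows essentially the same route as the paper's: both apply a linear injection sending the minimal extreme rays $b_i$ to $m\e_i$ and deduce condition (2) of orthogonality from the minimality of the $b_i$. The only difference is the scaling constant --- the paper uses the adjugate of the matrix $(b_1|\cdots|b_d)$, so its order is $\det(b_1|\cdots|b_d)$, while your $m=\operatorname{lcm}(n_1,\dots,n_d)$ is a (possibly proper) divisor of that determinant; both choices work, and your phrase ``generated by a unique minimal element'' for $H\cap\R_{\ge0}b_i$ is an inessential overstatement since you only ever use the minimal nonzero element itself.
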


\begin{proof}
Let $d$ be the rank of $H$ and 
$E$ the set of extreme rays of $H$.
Since $G(H)$ is isomorphic to $\Z^d$,
we may assume that $H$ is contained in $\Z^d$ and 
we can write $E=\{b_1,b_2,\ldots, b_d\}$
where, for each $1 \le i \le d$, $b_i\in\Z^d$ is minimal,
i.e. $m'b_i\notin H$ for $m'\in\Q$ with $0<m'<1$.
Since $\rank_{\Z}\sum_{i=1}^d\Z b_i=d$,
we may assume $\det(\sigma_i(b_j))_{0\leq i,j\leq d}>0$.
Consider the adjoint of the matrix
$(\sigma_i(b_j))_{0\leq i,j\leq d}$,
which defines an injection $\iota :\Z^d\to\Z^d$.
Then $H'=\iota(H)$ is orthogonal.
\end{proof}

\begin{ex}[{\cite[Example 2.2]{JY}}]
Let
\[
H=\begin{pmatrix}
5 & 1 & 8 & 2 & 2 \\
3 & 5 & 3 & 1 & 2 \\
1 & 2 & 5 & 1 & 1
\end{pmatrix}. 
\]
Considering the adjoint matrix of the submatrix of $H$ formed by the first three columns, we get
\[
\text{adj } 
\begin{pmatrix}
5 & 1 & 8  \\
3 & 5 & 3  \\
1 & 2 & 5 
\end{pmatrix}
=
\begin{pmatrix}
19 & 11 & -37 \\
-12 & 17 & 9 \\
1 & -9 & 22
\end{pmatrix}
\]
and hence
\[
H\cong
\begin{pmatrix}
19 & 11 & -37 \\
-12 & 17 & 9 \\
1 & -9 & 22
\end{pmatrix}
\begin{pmatrix}
5 & 1 & 8 & 2 & 2 \\
3 & 5 & 3 & 1 & 2 \\
1 & 2 & 5 & 1 & 1
\end{pmatrix}
=\begin{pmatrix}
91 & 0 & 0 & 12 & 23 \\
0 & 91 & 0 & 2 & 19 \\
0 & 0 & 91 & 15 & 6
\end{pmatrix}.
\]
We then have the semigroup $H' = \begin{pmatrix}
91 & 0 & 0 & 12 & 23 \\
0 & 91 & 0 & 2 & 19 \\
0 & 0 & 91 & 15 & 6
\end{pmatrix}$
satisfying the conditions of Lemma~\ref{orth_semi}.
\end{ex}

\begin{defn}\label{normalize} 
Let $H$ be an affine semigroup in $\Z^d$.
\begin{enumerate}
\item 
We put
\[
\overline H=\{ h\in G(H) \,|\, \exists n\in\N\setminus\{0\}, nh\in H\}
\]
and $\overline{H}$ is called the \wdef{normalization} of $H$.
Note that  
$\overline H$ is a semigroup containing $H$.
We say that $H$ is \wdef{normal}, if $H=\overline H$.

\item 
There exists $v_1, \dots, v_m\in\Z^d$
satisfying
\[
\overline H=\{ h\in G(H) \,|\, \langle h, v_i\rangle\geq0 \text{ for each }1 \le i \le m\},
\]
where $\langle\cdot,\cdot\rangle$ means the inner product
(cf. \cite[Proposition 6.1.2]{BH}).
We define the \wdef{relative interior} of $H$ as
\[
\relint H=
\{ h\in G(H) \,|\, \langle h, v_i\rangle>0 \text{ for each }1 \le i \le m\}
\cap H.
\]
\end{enumerate}
\end{defn}

\begin{prop}\label{normal}
Let $(H, E)$ be an orthogonal semigroup of order $m$.
Then the following are equivalent:
\begin{enumerate}
\item
$H$ is normal,
\item
$\Ap(H, E)=\{ h\in H \,|\, \sigma_i(h)< m\text{ for each }1 \le i \le d\}$,
\item
For each $h\in\Ap(H, E)$,
$h+ h^{\vee}=\sum_{i\in\supp h}m\e_i$ where 
$\supp h=\{ 1 \le i \le d \,|\,\sigma_i(h)\ne0\}$
and $h^{\vee}$ is defined in Definition~\ref{type}.
\end{enumerate}
\end{prop}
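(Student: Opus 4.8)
The plan is to reduce the normality condition to a concrete membership test and then to compare Apéry sets by elementary arithmetic modulo $m$, using that $\Z E=m\Z^d$ and $\N E=m\N^d$ for an orthogonal semigroup. First I would prove the reduction $\overline H=G(H)\cap\N^d$: the inclusion $\subseteq$ is clear because $H\subseteq\N^d$ and $nh\in H$ forces every $\sigma_i(h)\geq0$, while for $\supseteq$, given $h\in G(H)\cap\N^d$ one has $mh=\sum_i\sigma_i(h)\,(m\e_i)\in\N E\subseteq H$, so $h\in\overline H$. Consequently $H$ is normal if and only if $H=G(H)\cap\N^d$, which is the form of (1) that I will use below.

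Next I would record that the inclusion $\{h\in H:\sigma_i(h)<m\text{ for all }i\}\subseteq\Ap(H,E)$ holds for every orthogonal semigroup, since if $\sigma_j(h)<m$ then $h-m\e_j$ has negative $j$-th entry and hence lies outside $\N^d\supseteq H$. For (1)$\Rightarrow$(2) it remains to prove the reverse inclusion under normality: if $h\in\Ap(H,E)$ had $\sigma_j(h)\geq m$, then $h-m\e_j\in G(H)\cap\N^d=H$, contradicting $h\in\Ap(H,E)$. For (2)$\Rightarrow$(1) I take $g\in G(H)\cap\N^d$ and, by Definition~\ref{type}(3), choose $v\in\Ap(H,E)$ with $g-v\in\Z E=m\Z^d$, writing $g=v+mc$ with $c\in\Z^d$. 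Condition (2) gives $0\leq\sigma_i(v)<m$, and together with $\sigma_i(g)\geq0$ this forces each $\sigma_i(c)\geq0$; hence $mc\in\N E\subseteq H$ and $g=v+mc\in H$, so $H=G(H)\cap\N^d$ is normal.

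For (2)$\Rightarrow$(3), fix $h\in\Ap(H,E)$ and any admissible $h^\vee$. By (2) all entries of $h$ and of $h^\vee$ lie in $[0,m)$, while $h+h^\vee\in\N E=m\N^d$ forces $\sigma_i(h)+\sigma_i(h^\vee)\in\{0,m\}$ for each $i$; this sum equals $m$ exactly when $\sigma_i(h)>0$, i.e. when $i\in\supp h$, so $h+h^\vee=\sum_{i\in\supp h}m\e_i$ (which also shows $h^\vee$ is uniquely determined). For (3)$\Rightarrow$(2) I again only need the inclusion $\Ap(H,E)\subseteq\{h:\sigma_i(h)<m\}$. From (3) one reads off $0\leq\sigma_i(h)\leq m$, with $\sigma_i(h)=0$ for $i\notin\supp h$, so the sole point to exclude is the boundary value $\sigma_i(h)=m$.

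The hard part is precisely this boundary case, compounded by the non-uniqueness of $h^\vee$ flagged in Definition~\ref{type}(4). I would dispose of it by a symmetry argument: suppose $\sigma_i(h)=m$ for some $i\in\supp h$; then $\sigma_i(h^\vee)=0$, so $i\notin\supp h^\vee$. Since $h^\vee\in\Ap(H,E)$ and $h$ is itself an admissible dual of $h^\vee$, applying (3) to $h^\vee$ yields $h^\vee+h=\sum_{j\in\supp h^\vee}m\e_j$; comparing with $h+h^\vee=\sum_{j\in\supp h}m\e_j$ forces $\supp h=\supp h^\vee$, contradicting $i\in\supp h\setminus\supp h^\vee$. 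Hence $\sigma_i(h)<m$ for every $i$, condition (2) holds, and the three equivalences close.
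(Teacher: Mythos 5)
Your proof is correct and follows essentially the same route as the paper's: normality is reduced to $H=G(H)\cap\N^d$, the implication $(2)\Rightarrow(1)$ uses the same decomposition $h=v+b$ with $v\in\Ap(H,E)$ and $b\in\Z E$, and $(2)\Leftrightarrow(3)$ is the same coordinate-wise argument. The only difference is that you spell out two points the paper leaves implicit, namely the identity $\overline H=G(H)\cap\N^d$ and the symmetry argument showing $\supp h=\supp h^{\vee}$ (which the paper merely asserts as a ``Note''), and these additions are welcome rather than a change of method.
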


\begin{proof}
We prove $(1) \Rightarrow (2)$.
Assume that there exists $u\in\Ap(H, E)$ and $1 \le i\le d$
with $\sigma_i(u)\geq m$.
Then $u-m\e_i\notin H$ and $H$ is not normal.

Conversely, assume $(2)$.
Let $h\in \overline H=G(H)\cap \N^d$.
Then there is $v\in\Ap(H, E)$ and $b\in\Z E$
with $h=v+b$.
Then, for each $1 \le i \le d$, we have $\sigma_i(b)>-m$ 
because we assume the condition $(2)$
and $b\in\N E$, thus $h\in H$.
This implies that $H$ is normal,
which is the condition $(1)$.

Next, we prove the equivalence $(2) \Leftrightarrow (3)$.
Assume $(2)$ and let $h\in\Ap(H, E)$.
Since $h+h^{\vee}\in\N H$,
we have $h+h^{\vee}=\sum_{i\in\supp h}m\e_i$ by $(2)$.
Hence $(2)$ implies $(3)$.
Conversely, assume the condition $(3)$
and let $h\in\Ap(H, E)$.
By $(3)$, we have
$\sigma_i(h)\leq\sigma_i(h+h^{\vee})=m$ if $i\in\supp h$.
Note that $i\in\supp h$ if and only if $i\in\supp h^{\vee}$.
This implies $(2)$.
\end{proof}

\section{Cohen-Macaulay semigroup rings}

From now on, $\fd$ denotes always a field.
For a semigroup $H$,
we define the semigroup ring of $H$ over $\fd$ 
as
\[
\fd[H]=\fd[t^h]_{h\in H}=\bigoplus_{h\in H}\fd\cdot t^h.
\]
Here, for each $h \in H$, $t^h$ denotes merely a symbol, and $t^h\cdot t^{h'}=t^{h+h'}$ for $h, h'\in H$.
If $\rho : H\to H'$ is a homomorphism of semigroups,
i.e., it satisfies 
$\rho(h+h')=\rho(h)+\rho(h')$ for $h, h'\in H$,
it induces a ring homomorphism $\widetilde\rho : \fd[H]\to \fd[H']$.
Further, if $\rho$ is injective (resp. surjective or bijective),
so is $\widetilde\rho$.
If $H\subset\N^d$,
the inclusion map
induces an injection
\[
\fd[H] \hookrightarrow \fd[\N^d]\cong \fd[t_1, t_2,\dots, t_d],
\]
where $\fd[t_1,t_2,\ldots , t_d]$ denotes the polynomial ring over $\fd$ with $n$ indeterminates.
Hence, if $H$ is an affine semigroup,
then we may identify $\fd[H]$ as a graded $\fd$-subalgebra
of a polynomial ring over $\fd$ under any grading.
Then,
we may regard
$t^h$ as $t_1^{\sigma_1(h)}t_2^{\sigma_2(h)}\cdots t_d^{\sigma_d(h)}$,
using multi-index notation.
Recall that we denote the $i$-th entry of $h$ in $\Z^d$
by $\sigma_i(h)$.
We use this notation throughout this paper.

\begin{defn}\label{defn_deg} Let $H$ be an affine semigroup in $\Z^d$.
\begin{enumerate}
\item We define a degree of a monomial  $t^h$ in $\fd[H]\subset \fd[\N^d]$
by
\[\deg t^h=\rho(h)\; {\rm where}\; \rho :\Z^d=\bigoplus_{i=1}^d\Z\e_i\to\Z, 
\rho\left(\sum_{i=1}^da_i\e_i\right)=\sum_{i=1}^da_i.\]
\item For $h\in H$, we also denote $\rho(h)$ by $\deg h$.  
Namely, $\deg h=\deg t^h$.
\end{enumerate}
\end{defn}

\begin{rem}
By definition,
the degree depends on the embedding of $H$ in $\N^d$.
If $\iota : \N^d\to\N^d$ is an injection,
then we get another degree for $H$
by composing the original embedding $H\hookrightarrow \N^d$ with $\iota$.
\end{rem}

We recall definitions of the Hilbert series and the multiplicity. 

\begin{defn}\label{def_Handmult}
Let $R=\bigoplus_{n\geq0}R_n$ 
be a Noetherian positively graded ring with $R_0 = \fd$ a field.
Put $\mm=\bigoplus_{n>0}R_n$ be the graded maximal ideal of $R$. 
For a graded $R$-module $M$,
we denote the $n$-th component of $M$ by $M_n$.
A formal power series
\[
P_M(t)=\sum_{n\in\Z}(\dim_{\fd}M_n)t^n\in \Z[[t, t^{-1}]]
\]
is called the \wdef{Hilbert series} of $M$.
If $M$ is a finitely generated $R$-module,
there uniquely exist $Q_M(t)\in\Z[t, t^{-1}]$
and positive integers $a_1, \dots, a_{\dim_R M}$
satisfying
\[
P_M(t)=\frac{Q_M(t)}{\prod_{i=1}^{\dim_R M}(1-t^{a_i})}
\]
and $Q_M(1)\ne0$.
We denote the \wdef{multiplicity} of $M$ 
with respect to an ideal $\qq$ of definition of $M$ 
(i.e., $\dim_{\fd}M/\qq M<\infty$) by $\mult(\qq, M)$,
Namely,
\[
\mult(\qq, M)=\lim_{n\to\infty}\frac{s!}{n^s}\dim_{\fd}M/\qq^nM.
\]
where $s=\dim_R M$
(cf. \cite[Proposition 4.6.2]{BH}).
Then the following is well-known:
\begin{itemize}
\item 
If $\bb$ is a reduction of $\qq$ with respect to $M$,
namely there exists $n>0$ with $\qq^{n+1}M = \bb\qq^nM$,
then $\mult(\bb, M)=\mult(\qq, M)$.
\item 
Suppose that 
there exists a parameter ideal $\qq=(x_1, \dots,$ $x_{\dim_R M})$ 
for $M$ such that $\deg x_i=a_i$ for each $1 \le i\le \dim_R M$. Then 
\[
\mult(\qq, M)=Q_M(1)=\lim_{t\to1}
\left(\prod_{i=1}^{\dim_R M}(1-t^{a_i})\right)P_M(t).
\]
\end{itemize}
We denote $\mult(\mm, M)$ by $\mult(M)$, 
and refer to it as the \wdef{multiplicity} of $M$.
\end{defn}

\begin{ex}
Let $H=\langle n_1, \dots, n_e\rangle$ 
be a numerical semigroup
generated by positive integers $n_1<\cdots<n_e$,
i.e., $H=\sum_{i=1}^e\N n_i$ and $\gcd(n_1, \dots, n_e)=1$.
Then 
\[
P_{\fd[H]}(t)
=\left(\sum_{h\in\Ap(H, n_1)}t^h\right)\cdot(1+t^{n_1}+t^{2n_1}+\cdots)
=\frac{\sum_{h\in\Ap(H, n_1)}t^h}{1-t^{n_1}}.
\]
Then $Q_{\fd[H]}(t)=\sum_{h\in\Ap(H, n_1)}t^h$
and $\mult(\fd[H])=n_1$,
since $(t^{n_1})$ is a reduction of the graded maximal ideal of $\fd[H]$,
\end{ex}

We summarize known results and include proof in the present paper for the sake of completeness.

\begin{thm}\label{symp_CM}
Let $(H, E)$ be a simplicial semigroup 
and put $\aa=(t^b \mid b\in E)$, the ideal generated by $\{t^b \mid b \in E\}$ in $\fd[H]$.
Then the following are equivalent:
\begin{enumerate}
\item
$\fd[H]$ is Cohen-Macaulay,
\item
$\fd[H]$ is free over $\fd[\N E]$,
\item
$P_{\fd[H]}(t)=\frac{\sum_{h\in\Ap(H, E)}t^{\deg h}}
{\prod_{b\in E}(1-t^{\deg b})}$,
\item
$\mult(\aa, \fd[H])=|\Ap(H, E)|$.
\end{enumerate}
In this case,
we have $\fd[H]=\bigoplus_{h\in\Ap(H, E)}\fd[\N E]\cdot t^h$.
\end{thm}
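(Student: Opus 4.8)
The plan is to prove the equivalences by establishing $(2)\Leftrightarrow(1)$, $(2)\Rightarrow(3)\Rightarrow(4)\Rightarrow(2)$ (or a similar cycle), with the key structural fact being that $\fd[\N E]$ is a polynomial ring in $d$ variables (noted already in the Notation block) and that $\fd[H]$ is a finitely generated module over it. The central observation driving everything is the decomposition
\[
\fd[H]=\bigoplus_{v\in\Ap(H,E)}\fd[\N E]\cdot t^v,
\]
which I would prove first as a direct sum of $\fd[\N E]$-submodules; the final displayed conclusion then falls out, and most of the labeled conditions are just reformulations of this decomposition.

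First I would verify this decomposition at the level of $\fd$-vector spaces. Each $h\in H$ can be written as $h=v+b$ with $v\in\Ap(H,E)$ and $b\in\N E$: starting from $h$, repeatedly subtract elements of $E$ while staying in $H$; this terminates because $H$ is positive (so $\deg$ strictly decreases), landing in $\Ap(H,E)$ by the definition of the Apéry set. This shows $\fd[H]=\sum_{v\in\Ap(H,E)}\fd[\N E]\cdot t^v$. For directness of the sum, I would use that $\fd[\N E]$ is a \emph{polynomial} ring with the $t^b$ ($b\in E$) algebraically independent monomials, so a monomial $t^{v+b}$ determines the pair $(v,b)$ uniquely once we know $v\in\Ap(H,E)$; distinct $v$ give monomials in different $\N E$-cosets of $\Z E$ in $G(H)$, hence the spanning is a direct sum.

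With the decomposition in hand the implications are short. $(2)\Leftrightarrow(1)$: since $\fd[\N E]$ is a polynomial ring (regular, hence the $t^b$ form a homogeneous system of parameters for $\fd[H]$ as $nH\subset\N E$ forces $\dim\fd[H]=d$), freeness over $\fd[\N E]$ is exactly the Cohen–Macaulay condition by the standard criterion that a finite graded module over a polynomial ring is free iff it is Cohen–Macaulay of maximal dimension (the graded Auslander–Buchsbaum / Hironaka argument). $(2)\Rightarrow(3)$: if $\fd[H]$ is free with basis $\{t^v\}_{v\in\Ap(H,E)}$, its Hilbert series is $\big(\sum_{v}t^{\deg v}\big)\big/\prod_{b\in E}(1-t^{\deg b})$, reading off degrees from the grading. $(3)\Rightarrow(4)$: using the multiplicity formula from Definition~\ref{def_Handmult} with $\qq=\aa$ (whose generators $t^b$ have degrees $\deg b$ and form a parameter ideal), I get $\mult(\aa,\fd[H])=Q_{\fd[H]}(1)=|\Ap(H,E)|$. $(4)\Rightarrow(2)$: in general the decomposition gives a surjection of the free module $\bigoplus_{v}\fd[\N E]\cdot t^v$ onto $\fd[H]$, and comparing ranks via multiplicity, equality $\mult(\aa,\fd[H])=|\Ap(H,E)|$ forces this surjection to be an isomorphism, i.e.\ freeness.

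The step I expect to be the main obstacle is proving directness of the sum rigorously and, relatedly, pinning down $(4)\Rightarrow(2)$. The subtlety is that without Cohen–Macaulayness the $t^v$ need not be a free basis—the spanning map from the free module could have a kernel—so one must argue that the generic number of $\fd[\N E]$-generators is $|\Ap(H,E)|$ and that the multiplicity of the free module on $|\Ap(H,E)|$ generators equals $|\Ap(H,E)|\cdot\mult(\fd[\N E])$; here $\mult(\aa,\fd[\N E])=1$ since $\fd[\N E]$ is a polynomial ring and $\aa$ is generated by a regular sequence of its variables. Establishing that $\mult(\aa,\fd[H])\le|\Ap(H,E)|$ always, with equality iff freeness, is the quantitative heart of the argument and is where I would spend the most care.
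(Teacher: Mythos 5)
There is a genuine gap, and it sits at the foundation of your plan. You propose to prove the decomposition $\fd[H]=\bigoplus_{v\in\Ap(H,E)}\fd[\N E]\cdot t^v$ \emph{unconditionally} and then read off the four conditions as reformulations of it. But this direct-sum decomposition is precisely equivalent to the Cohen--Macaulayness of $\fd[H]$; it is the conclusion of the theorem (``in this case, we have\dots''), not an available tool. If it held for every simplicial semigroup, all four conditions would always hold, contradicting the existence of non-Cohen--Macaulay simplicial semigroup rings. The specific error is in your directness argument: it is false that distinct elements of $\Ap(H,E)$ lie in distinct cosets of $\Z E$ in $G(H)$. Take $H\subset\Z^2$ generated by $\binom40,\binom31,\binom13,\binom04$ with $E=\{\binom40,\binom04\}$ (so $\fd[H]=\fd[s^4,s^3t,st^3,t^4]$, the standard non-CM example). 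Then $\Ap(H,E)=\{\binom00,\binom31,\binom13,\binom62,\binom26\}$, and $\binom62$ and $\binom26$ are congruent modulo $\Z E$; indeed $\binom62+\binom04=\binom26+\binom40=\binom66$, so the monomial $t^{\binom66}$ lies in two of your summands and the sum is not direct. (In general the spanning statement $\fd[H]=\sum_v\fd[\N E]\,t^v$ is fine, but $|\Ap(H,E)|\ge[G(H):\Z E]$ with equality exactly in the CM case, so coset collisions among Ap\'ery elements are unavoidable otherwise.) Your closing paragraph half-acknowledges this, but that makes the proposal internally inconsistent rather than repairing it.

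The remaining pieces of your argument are essentially sound and close to the paper's route, and the proof can be reorganized around them: $(1)\Leftrightarrow(2)$ by Auslander--Buchsbaum over the polynomial ring $\fd[\N E]$ (as in the paper; for $(2)\Rightarrow(3)$ you still need the small argument that a homogeneous monomial basis of a free $\fd[\N E]$-module $\fd[H]$ must be exactly $\{t^v\}_{v\in\Ap(H,E)}$, which the paper supplies); $(3)\Rightarrow(4)$ by evaluating $Q_{\fd[H]}(1)$; and for $(4)$ back to $(1)$ or $(2)$ the clean statement is that $\aa$ is a homogeneous parameter ideal with $\dim_\fd \fd[H]/\aa=|\Ap(H,E)|$, and $\mult(\aa,\fd[H])\le\dim_\fd \fd[H]/\aa$ always, with equality if and only if $\fd[H]$ is Cohen--Macaulay --- equivalently, your surjection from the free module on $\Ap(H,E)$ has torsion-free kernel of full dimension whenever it is nonzero, forcing a strict drop in multiplicity. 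The quantitative inequality you flag at the end is indeed the right thing to prove; the direct sum should be derived from it, not assumed.
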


\begin{proof}
Put $R=\fd[H]$ and $R'=\fd[\N E]$. If $R$ is Cohen-Macaulay, 
then it has depth $d=\dim R'$. 
By the Auslander-Buchsbaum formula, 
$R$ is projective over the regular ring $R'$, and hence free over $R'$. 
This establishes the implication $(1) \Rightarrow (2)$. 

If $R$ is free over $R'$, 
then there exists $B \subset H$ 
such that $\{t^h \mid h\in B\}$ forms an $R'$-basis of $R$. 
It clearly follows that $\Ap(H, E) \subseteq B$. 
If $B \setminus \Ap(H, E) \ne \emptyset$, there exists some $h \in B$, $h' \in \Ap(H, E)$, and $h'' \in \N E$ such that $h = h' + h''$. 
This situation would lead to a non-trivial relation in $R$, 
contradicting the assumption that $\{t^h \mid h\in B\}$ is a basis. 
Therefore, $B = \Ap(H, E)$, which shows that $(2)$ implies $(3)$.

Next, assuming condition $(3)$ holds, 
we have $Q_R(t) = \sum_{h \in \Ap(H, E)} t^{\deg h}$ 
and consequently, $\mult(\aa, R) = Q_R(1) = |\Ap(H, E)|$. 
This leads to condition $(4)$. 
Finally, given that 
$$\dim_{\fd} R/\aa = |\Ap(H, E)|,$$
we conclude that condition $(4)$ implies the Cohen-Macaulayness of $R$, namely, we deduce that $(4) \Rightarrow (1)$. 
This completes the proof of Theorem \ref{symp_CM}.
\end{proof}

\begin{defn}\label{canonical}
Let $(H, E)$ be a simplicial semigroup.
Assume that $R=\fd[H]$ is Cohen-Macaulay.
Let
\begin{align*}
\omega_H
&=\left\{
-w+\sum_{b\in E}b+h \,\middle|\, w\in\Soc(H, E), h\in H
\right\}\subset G(H)
\intertext{and put}
\K_R&=\fd[t^h]_{h\in\omega_H}\subset\fd[G(H)].
\intertext{The $R$-module $\K_R$ is called 
the \wdef{graded canonical module} of $R$.
The minimal number of homogeneous generators of $\K_R$,
called the type of $R$, denoted by $\type(R)$,
coincides
with the type of $H$ (see Definition~\ref{type}).
We define the $a$-invariant of $R$ (\cite[Definition 3.1.4]{GW}) by}
a(R)&=-\min\{ n \,|\, (\K_R)_n\ne0\}\\
&=\max\{\deg h \,|\, h\in\Soc(H, E)\}-\sum_{b\in E}\deg b.
\end{align*}
\end{defn}

Indeed, the following lemma shows that the module $\K_R$ defined above is consistent with the canonical module as generally defined
(compare with \cite[Corollary 4.4.6]{BH}).

\begin{lem} 
Let $(H, E)$ be a simplicial semigroup.
Assume that $R=\fd[H]$ is Cohen-Macaulay.
Then $P_{\K_R}(t) = (-1)^dP_{\fd[H]}(t^{-1})$
where $d=\rank H$.  
\end{lem}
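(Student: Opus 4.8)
The plan is to reduce the identity to a single explicit formula for $P_{\K_R}(t)$ and then finish by an elementary manipulation of rational functions. Since $R=\fd[H]$ is Cohen--Macaulay, Theorem~\ref{symp_CM} gives
\[
P_{R}(t)=\frac{\sum_{h\in\Ap(H,E)}t^{\deg h}}{\prod_{b\in E}(1-t^{\deg b})}.
\]
Using $1-t^{-a}=-\,t^{-a}(1-t^{a})$ for each $b\in E$ (with $a=\deg b$), one finds $\prod_{b\in E}(1-t^{-\deg b})=(-1)^{d}\,t^{-\sum_{b\in E}\deg b}\prod_{b\in E}(1-t^{\deg b})$, whence
\[
(-1)^{d}P_{R}(t^{-1})=\frac{\sum_{h\in\Ap(H,E)}t^{\sum_{b\in E}\deg b-\deg h}}{\prod_{b\in E}(1-t^{\deg b})}.
\]
Thus it suffices to prove that this right-hand side equals $P_{\K_R}(t)$; call this identity $(\star)$.

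To establish $(\star)$ I would compute $P_{\K_R}(t)$ from the module structure of $\K_R$ over the polynomial subring $S=\fd[\N E]$. By Theorem~\ref{symp_CM}, $R=\bigoplus_{h\in\Ap(H,E)}S\cdot t^{h}$ is a finite free $S$-module of rank $|\Ap(H,E)|$. Since $\K_R$ is the canonical module of the Cohen--Macaulay ring $R$, it is a maximal Cohen--Macaulay $R$-module, hence a maximal Cohen--Macaulay $S$-module; as $S$ is regular, the Auslander--Buchsbaum formula forces $\K_R$ to be $S$-free. Because $\fd[H]$ is a domain and $\K_R$ has rank one over $R$, its $S$-rank is again $|\Ap(H,E)|$, so $\K_R\cong\bigoplus_{j}S(-d_{j})$ with exactly $|\Ap(H,E)|$ homogeneous summands, and
\[
P_{\K_R}(t)=\frac{\sum_{j}t^{d_{j}}}{\prod_{b\in E}(1-t^{\deg b})}.
\]
Comparing with $(\star)$, the whole problem collapses to identifying the degree multiset $\{d_{j}\}$ with $\{\sum_{b\in E}\deg b-\deg h:h\in\Ap(H,E)\}$.

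This last identification is where the real work lies, and I would carry it out in one of two ways. The conceptual route is to note that $\K_R\cong\operatorname{Hom}_S(R,\K_S)$, where $\K_S=S\bigl(-\sum_{b\in E}\deg b\bigr)$ is the canonical module of the polynomial ring $S$; dualizing the decomposition $R=\bigoplus_{h}S(-\deg h)$ then yields $\K_R\cong\bigoplus_{h\in\Ap(H,E)}S\bigl(\deg h-\sum_{b\in E}\deg b\bigr)$ at once, giving the desired degrees. The combinatorial route, staying closer to the definition of $\omega_H$, is to identify a homogeneous $S$-basis of $\K_R$ with the set $\{g\in\omega_H:g-b\notin\omega_H\text{ for all }b\in E\}$ and to produce a degree-reversing bijection from $\Ap(H,E)$ onto it, $h\mapsto g$ with $\deg g=\sum_{b\in E}\deg b-\deg h$, built from the socle data $\Soc(H,E)$ and the conjugation $h\mapsto h^{\vee}$ of Definition~\ref{type}. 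The main obstacle is precisely this degree-reversing symmetry of the Ap\'ery set: the reduction to $(\star)$ and the freeness of $\K_R$ over $S$ are routine, but verifying that the $S$-generators of $\K_R$ mirror $\Ap(H,E)$ under the reflection $\deg h\mapsto\sum_{b\in E}\deg b-\deg h$ is the substantive step. Once the degrees are pinned down, $(\star)$ holds and the lemma follows.
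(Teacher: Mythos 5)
Your reduction to the identity $(\star)$ is exactly right, but the argument has a genuine gap at the two places where you lean on $\K_R$ being ``the canonical module.'' In this paper $\K_R=\fd[t^h]_{h\in\omega_H}$ is a purely combinatorial object --- an $R$-submodule of $\fd[G(H)]$ built from $\Soc(H,E)$ --- and the lemma is precisely the consistency check that this object has the Hilbert series the homologically defined canonical module must have. So you cannot invoke ``$\K_R$ is a maximal Cohen--Macaulay $R$-module, hence free over $S=\fd[\N E]$'' or ``$\K_R\cong\operatorname{Hom}_S(R,\K_S)$'': nothing proved up to this point identifies the combinatorial $\K_R$ with the homological one, and assuming that identification is circular relative to what the lemma is meant to establish. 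Your second, combinatorial route is the correct one, but you explicitly leave the degree-reversing bijection as a sketch, and that bijection is the entire content of the statement; as written, the proof defers the substantive step.

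The step closes directly from Definition~\ref{canonical} and Theorem~\ref{symp_CM}, with no homological input. Put $\sigma=\sum_{b\in E}b$. Freeness of $R$ over $S$ forces $\Ap(H,E)$ to be a complete set of representatives of $G(H)/\Z E$ and $H=\bigsqcup_{h\in\Ap(H,E)}(h+\N E)$. One then shows
\[
\omega_H=\bigsqcup_{h\in\Ap(H,E)}\bigl((\sigma-h)+\N E\bigr),
\]
the union being disjoint because the pieces lie in distinct cosets of $\Z E$. For $\supseteq$: each $h\in\Ap(H,E)$ satisfies $h\leq_H w$ for some $w\in\Soc(H,E)$, and then $w-h\in\Ap(H,E)$ (if $w-h-b\in H$ for some $b\in E$ then $w-b\in H$), so $\sigma-h=-w+\sigma+(w-h)\in\omega_H$, and $\omega_H+\N E\subseteq\omega_H$. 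For $\subseteq$: given $g=-w+\sigma+h_0\in\omega_H$, let $h\in\Ap(H,E)$ represent the coset of $-g$; then $h+h_0\equiv w\pmod{\Z E}$, and writing $h+h_0=h'+u$ with $h'\in\Ap(H,E)$, $u\in\N E$ forces $h'=w$ (as $w\in\Ap(H,E)$ and representatives are unique), whence $g-(\sigma-h)=h+h_0-w=u\in\N E$. Summing $t^{\deg(\cdot)}$ over this decomposition yields exactly the right-hand side of $(\star)$, and the lemma follows. With this inserted, your outline becomes a complete proof; it is, in substance, the computation the paper compresses into ``readily derived from the definition of $\K_R$'' together with the Hilbert series formula of Theorem~\ref{symp_CM}.
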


\begin{proof}  
The assertion is readily derived 
from the definition of $\K_R$ 
and the implication $(3) \Rightarrow (1)$ in Theorem \ref{symp_CM}.
\end{proof}

Note that 
the concept of the graded canonical module 
is defined in a more general setting using the homological method. 
For detailed information, please refer to \cite[Definition 3.6.8]{BH}.

For $w\in \Soc(H, E)$, there exists $w^{\vee}\in\Ap(H, E)$
with $w^{\vee}\equiv -w \mod \Z E$
(see Definition~\ref{type}).
Hence, there exists $v\in H$
with $-w+v\in H$ for any $w\in \Soc(H, E)$.
This implies that
we may regard the graded canonical module
as an ideal of $R$
by suitable shifting
(cf. \cite{EM2023}).
Further if $H$ is normal,
$\omega_H$
is the relative interior of $H$
defined in Definition~\ref{relint}
(see \cite[Theorem 6.3.5]{BH}).

Now, we aim to extend \cite[Theorem 3.2]{HJS}
to the case when $(H, E)$ is not necessarily normal.
We begin with the following computation on the multiplicity of $\K_R/t^vR$
with respect to the ideal $\bb = (t^b-t^{b'} \mid b, b' \in E)$
where $R=\fd[H]$
and $v\in\omega_H$.
Notice that, 
for fixed $b \in E$, 
$\bb = (t^b -t^{b'} \mid b \ne b' \in E)$. 
Hence, $\bb$ is minimally generated by $d-1$ elements, 
where $d$ is the rank of $H$. 
Hence $\bb$ is a parameter ideal for $\K_R / t^v \K_R$ for $v \in \omega_H$.

\begin{prop}\label{Hilbert}
Let $(H, E)$ be an orthogonal semigroup of rank $d$ and order $m$.
Assume that $R=\fd[H]$ is Cohen-Macaulay
and put $\bb=(t^b-t^{b'} \mid b, b'\in E)$.
Then, we have the following.
\begin{enumerate}
\item
Let $w\in\Soc(H, E)$ and $v=-w+\sum_{b\in E}b\in\omega_H$. 
\begin{enumerate}
\item
For each $h\in\Ap(H, E)$,
there uniquely exist $h'\in\Ap(H, E)$, $u\in\N E$, and $c_{w, h}\geq 0$
such that $h+h'=w+u$
and $m{\cdot}c_{w, h}=\deg u=\deg h+\deg h'-\deg w$.
\item
For all $h \in \Ap(H,E)$, 
the equality $c_{w, h}=0$ holds if and only if $h\leq_H w$.
\item 
The following equality holds:
\[
\mult(\bb, \K_R/t^vR)
=\sum_{\begin{subarray}{c}h\in\Ap(H, E)\\ h\not\leq_Hw\end{subarray}}c_{w, h}.
\]
\item
We always have $\mult(\bb, \K_R/t^vR)
\geq\mult(\mm, \K_R/t^vR)\geq\type H-1$.
Furthermore, the equality $\mult(\bb, \K_R / t^vR) = \type H-1$ holds 
if and only if 
\[
\Ap(H, E)\setminus\{ h\in\Ap(H, E) \mid h\leq_Hw\}
=\Soc(H, E)\setminus\{w\}
\]
and $c_{w, h}=1$ for all $h\in\Soc(H, E)\setminus\{w\}$.
\end{enumerate}
\item
Let $v, v'\in\omega_H$. If $\deg v\geq\deg v'$,
then
\[
\mult(\bb, \K_R/t^vR)
\geq
\mult(\bb, \K_R/t^{v'}R)
\]
and the equality holds if and only if $\deg v=\deg v'$.
\end{enumerate}
\end{prop}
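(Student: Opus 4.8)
The plan is to reduce each multiplicity to a length computation over a polynomial ring in one variable. Write $S=\fd[\N E]$, a polynomial ring in the $d$ variables $t^{m\e_i}$, and record three facts that will do all the work. First, by Theorem~\ref{symp_CM}, $R=\bigoplus_{a\in\Ap(H,E)}S\,t^{a}$ is $S$-free; since $\K_R$ is a maximal Cohen–Macaulay module over the regular ring $S$ it is $S$-free as well, and the Hilbert series identity $P_{\K_R}(t)=(-1)^dP_R(t^{-1})$ rewrites as $P_{\K_R}(t)=\bigl(\sum_{a\in\Ap(H,E)}t^{\deg(\sum_{b\in E}b)-\deg a}\bigr)/\prod_{b\in E}(1-t^{\deg b})$, which together with $t^{\sum_{b\in E}b-a}\in\K_R$ (each $a\in\Ap(H,E)$ lies below some element of $\Soc(H,E)$) forces $\K_R=\bigoplus_{a\in\Ap(H,E)}S\,t^{\sum_{b\in E}b-a}$. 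Second, $H\cap\Z E=\N E$, since a vector in $m\Z^d\cap\N^d$ is a nonnegative combination of the $m\e_i$. Third, the uniqueness of the $S$-basis decomposition of $R$ makes the reduction map $\Ap(H,E)\to\Z^d/\Z E$ injective.

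For (1)(a), given $h\in\Ap(H,E)$ I would use Definition~\ref{type}(3) to choose $h'\in\Ap(H,E)$ with $h'\equiv w-h\pmod{\Z E}$; injectivity makes $h'$ unique. Then $u:=h+h'-w\in\Z E$. Writing the unique basis decomposition of $h+h'\in H$ and comparing residues modulo $\Z E$ shows its Apéry part must be $w$ (both $w$ and that part lie in $\Ap(H,E)$ in the same class), so $h+h'=w+u$ with $u\in\N E$; set $c_{w,h}=\deg u/m\in\N$, and $\deg u=\deg h+\deg h'-\deg w$ by additivity of $\deg$. For (1)(b), $c_{w,h}=0$ iff $u=0$ iff $h+h'=w$, which gives $h\leq_H w$; conversely if $w-h\in H$ its Apéry part is exactly $h'$, forcing $u=0$.

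For (1)(c) and (2) I would run a single computation. The short exact sequence $0\to R\to\K_R\to\K_R/t^vR\to0$ given by multiplication by $t^v$ shows $M:=\K_R/t^vR$ is Cohen–Macaulay of dimension $d-1$, and as $\bb$ is a parameter ideal, $\mult(\bb,M)=\dim_{\fd}\K_R/(t^vR+\bb\K_R)$. Reducing modulo $\bb$ collapses every $t^{m\e_i}$ to one variable $T$, so $\K_R/\bb\K_R=\bigoplus_{a\in\Ap(H,E)}\fd[T]\,t^{\sum_{b\in E}b-a}$ is $\fd[T]$-free. For $a'\in\Ap(H,E)$ the generator $t^{v+a'}$ of $t^vR$ decomposes as $t^{\sum_{b\in E}b-\delta(a')+u_{a'}}$ with $\delta(a')\in\Ap(H,E)$, $u_{a'}\in\N E$, and $\delta$ a bijection of $\Ap(H,E)$ (injective modulo $\Z E$); hence it maps to $T^{\deg u_{a'}/m}\,t^{\sum_{b\in E}b-\delta(a')}$. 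Since $\delta$ is a bijection the quotient splits as $\bigoplus_{a'}\fd[T]/(T^{\deg u_{a'}/m})$, giving $\mult(\bb,M)=\sum_{a'\in\Ap(H,E)}\deg u_{a'}/m$. Taking $v=-w+\sum_{b\in E}b$ identifies $u_{a'}$ with the data of (1)(a), so $\mult(\bb,M)=\sum_h c_{w,h}=\sum_{h\not\leq_H w}c_{w,h}$ by (1)(b), which is (1)(c). For general $v$, substituting $\deg u_{a'}=\deg v+\deg a'+\deg\delta(a')-\deg(\sum_{b\in E}b)$ and using $\sum_{a'}\deg\delta(a')=\sum_{a'}\deg a'$ yields $\mult(\bb,\K_R/t^vR)=\tfrac{|\Ap(H,E)|}{m}\deg v+C$ with $C$ independent of $v$; the positive slope gives (2) and its equality case.

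Finally, for (1)(d) the inequality $\mult(\bb,M)\geq\mult(\mm,M)$ holds because $\bb\subseteq\mm$. Since $M$ is Cohen–Macaulay, a minimal reduction of $\mm$ gives $\mult(\mm,M)\geq\mu(M)$, and $\mu(M)=\type H-1$ because $t^v$ is a minimal generator of $\K_R$ (one per element of $\Soc(H,E)$), so $\mu(\K_R/t^vR)=\mu(\K_R)-1$. For the equality characterization I would feed (1)(c) the observation that $\Soc(H,E)\setminus\{w\}\subseteq\{h\in\Ap(H,E):h\not\leq_H w\}$ with $c_{w,h}\geq1$ there (maximality of socle elements and (1)(b)), whence $\sum_{h\not\leq_H w}c_{w,h}\geq|\Soc(H,E)|-1=\type H-1$; equality forces both that no further $h$ satisfies $h\not\leq_H w$ and that $c_{w,h}=1$ on $\Soc(H,E)\setminus\{w\}$. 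The main obstacle is the first paragraph: pinning down the explicit $S$-free basis $\K_R=\bigoplus_{a}S\,t^{\sum_{b\in E}b-a}$ and the nonnegativity $u\in\N E$ in (a). Once these are in place every multiplicity becomes a sum of lengths $\dim_{\fd}\fd[T]/(T^e)=e$.
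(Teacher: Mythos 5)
Your proof is correct, and parts (1)(a), (1)(b) and (1)(d) follow essentially the same reasoning as the paper: $h'$ is produced from the residue class of $w-h$ modulo $\Z E$ via the Ap\'ery set, $c_{w,h}=0$ is identified with $h\leq_H w$, and (d) is deduced from (c) together with $\Soc(H,E)\setminus\{w\}\subseteq\{h\in\Ap(H,E)\mid h\not\leq_H w\}$ and $c_{w,h}\geq 1$ there. Where you genuinely diverge is in the multiplicity computations (1)(c) and (2). The paper works entirely with Hilbert series: it writes $P_{\K_R/t^vR}(t)=(-1)^dP_R(t^{-1})-t^{\deg v}P_R(t)$, reorganizes the numerator as $\sum_{h}t^{*}(1-t^{mc_{w,h}})$, and evaluates $\lim_{t\to1}(1-t^m)^{d-1}P_{\K_R/t^vR}(t)$; for (2) it computes the difference of two such series and gets $\frac{\deg v-\deg v'}{m}\mult(\bb+t^bR,R)\geq 0$. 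You instead pin down the explicit $S$-free basis $\K_R=\bigoplus_{a\in\Ap(H,E)}S\,t^{\sum_{b\in E}b-a}$ (which the paper never states, though it is an easy consequence of Theorem~\ref{symp_CM} and the Hilbert series lemma), reduce modulo $\bb$ to a free $\fd[T]$-module, and compute $\mult(\bb,\K_R/t^vR)=\dim_\fd \K_R/(t^vR+\bb\K_R)$ as $\bigoplus_{a'}\dim_\fd \fd[T]/(T^{\deg u_{a'}/m})$ using the bijection $\delta$ of $\Ap(H,E)$. This buys you a closed formula $\mult(\bb,\K_R/t^vR)=\frac{|\Ap(H,E)|}{m}\deg v+C$ valid for every $v\in\omega_H$, from which (2) and its equality case drop out immediately, and it makes visible the module-theoretic meaning of the numbers $c_{w,h}$ as lengths of cyclic $\fd[T]$-summands; the paper's route avoids having to justify the free decomposition of $\K_R$ and the bijectivity of $\delta$, at the cost of a slightly more opaque limit manipulation. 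Both arguments ultimately rest on the same input, namely Theorem~\ref{symp_CM}.
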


\begin{proof}
We prove $(1)$.
Since $w-h\in G(H)$,
and since $R$ is Cohen-Macaulay,
there uniquely exists $u\in\N E$ with $w-h+u\in\Ap(H, E)$
by the definition of Ap\'ery sets
(see Definition~\ref{type}).
Then, by putting $h'=w-h+u$ and $c_{w, h}=\frac{\deg u}m$,
we get the assertion (a).
Further,
$c_{w, h}=0$ if and only if $w-h=h'\in\Ap(H, E)\subset H$,
namely $h\leq_Hw$.
This proves (b).

(c)\ 
Put $\deg v=s$.
Note $\dim_R \K_R/t^vR=d-1$
and that $\bb$ is generated by $d-1$ elements.
Since
\[\begin{CD}
0 @>>> R @>t^v>> \K_R @>>> \K_R/t^vR @>>> 0
\end{CD}\]
is exact,
we have
\begin{align*}
P_{\K_R/t^vR}(t)
&=P_{\K_R}(t)-t^sP_R(t)
=(-1)^dP_R(t^{-1})-t^sP_R(t)\\
&=\frac{t^s(t^{\deg w}\sum_{h'\in\Ap(H, E)}t^{-\deg h'}
-\sum_{h\in\Ap(H, E)}t^{\deg h})}
{(1-t^m)^d}\\
&=\frac{t^s\sum_{h\in\Ap(H, E)}t^{\deg w-\deg h'}(1-t^{\deg h+\deg h'-\deg w})}
{(1-t^m)^d}\\
&=\frac{\sum_{h\in\Ap(H, E)}t^{s+\deg w-\deg h'}(1-t^{mc_{w, h}})}
{(1-t^m)^d},
\end{align*}
by Theorem~\ref{symp_CM}.
Note that in the second equation from the bottom, 
for each $h\in\Ap(H, E)$,
we choose $h'$ so that
it satisfies the condition in (a).
Hence,
by Definition~\ref{def_Handmult},
\begin{align*}
\mult(\bb, \K_R/t^vR)
&=\lim_{t\to1}(1-t^m)^{d-1}P_{\K_R/t^vR}(t)\\
&=\lim_{t\to1}\frac{\sum_{h\in\Ap(H, E)}t^{s+\deg w-\deg h'}(1-t^{mc_{w, h}})}
{1-t^m}\\
&=\sum_{h\in\Ap(H, E)}c_{w, h}.
\end{align*}
Therefore (c) is proved, and (d) follows from (c).

(2)\ 
Put $\deg v=s$ and $\deg v'=s'$. Then $s \ge s'$ and, by Definition~\ref{def_Handmult},
we have
\begin{align*}
\lefteqn{\mult(\bb, \K_R/t^vR)-\mult(\bb, \K_R/t^{v'}R)}\\
&=\lim_{t\to1}(1-t^m)^{d-1}(P_{\K_R/t^vR}(t)-P_{\K_R/t^{v'}R}(t))\\
&=\lim_{t\to1}(1-t^m)^{d-1}((P_{\K_R}(t)-t^sP_R(t))
-(P_{\K_R}(t)-t^{s'}P_R(t)))\\
&=\lim_{t\to1}t^{s'}(1-t^{s-s'})(1-t^m)^{d-1}P_R(t)\\
&=\lim_{t\to 1}t^{s'}\frac{1-t^{s-s'}}{1-t^m} (1-t^m)^d P_R(t)\\
&=\frac{s-s'}m\mult(\bb+t^bR, R)\geq0,
\end{align*}
where $b\in E$.
\end{proof}

\begin{rem}
The equality $c_{w, h} = 1$ does not always hold, even if $h\not\leq_Hw$.
Let $H=\langle6, 7, 16, 17\rangle$ be a numerical semigroup,
and $w=21, h=17\in\Soc(H, E)$,
where $E=\{6\}$.
Then $17+16-21=12=6\cdot2$ and $c_{6, 17}=2$.
\end{rem}

As previously mentioned, our focus is on the module $\K_R/t^vR$, where $v \in \omega_H$. 
In order to explore the structure of $\K_R / t^vR$, particularly to determine the conditions under which $\bb = (t^b - t^{b'} \mid b, b' \in E)$ is a reduction of $\mm$ with respect to $\K_R/ t^vR$, we first examine an analogous problem of when $\bb$ is a reduction of $\mm$ with respect to $R/t^wR$ for each $w \in H$. 
While \cite[Theorem 2.4]{HJS} has essentially proven the following proposition under the assumption that $H$ is normal, we will also provide proof because this paper aims to further the discussion without making this assumption.

\begin{prop}[{cf. \cite[Theorem 2.4]{HJS}}]\label{almoststandard}
Let $(H, E)$ be an orthogonal semigroup of rank $d$ and order $m$.
Put $R=\fd[H]$,
$\mm$ the graded maximal ideal of $R$
and $\bb = (t^b-t^{b'} \mid b,b' \in E)$.
Then, for $w\in H\setminus\{0\}$,
the following are equivalent:
\begin{enumerate}
\item
$\min\{\deg h \,|\, h\in H\setminus\{0\}, \supp w\not\subset\supp h\}
\geq m$ where $\supp w=\{ 1 \le i\le d \,|\, \sigma_i(w)\ne0\}$.
\item
$\bb$ is a reduction of $\mm$ with respect to $R/t^wR$.
\end{enumerate}
\end{prop}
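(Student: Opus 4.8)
The plan is first to strip $M:=R/t^wR$ out of the statement. Since $t^w$ is a nonzerodivisor on the domain $R$, for any $n$ the equality $\mm^{n+1}M=\bb\mm^nM$ holds in $M$ if and only if $\mm^{n+1}\subseteq\bb\mm^n+t^wR$ in $R$ (the reverse inclusion is automatic from $\bb\subseteq\mm$), and if this holds for one $n$ it holds for all larger $n$. I then read off the two mechanisms available modulo $\bb\mm^n+t^wR$ on the monomial basis. Writing $\operatorname{ord}(h)$ for the largest $r$ with $h$ a sum of $r$ nonzero elements of $H$ (so $\mm^r$ is spanned by the $t^h$ with $\operatorname{ord}(h)\ge r$), a generator $t^{g'}(t^{m\e_i}-t^{m\e_j})$ of $\bb\mm^n$ yields the \emph{swap relation} $t^{g'+m\e_i}\equiv t^{g'+m\e_j}$, valid whenever $\operatorname{ord}(g')\ge n$, while $t^wR$ kills every $t^h$ with $h\ge_H w$. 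Thus (2) is equivalent to: for $n\gg0$, every $g\in H$ with $\operatorname{ord}(g)\ge n+1$ and $g\not\ge_H w$ can be joined, by a chain of extreme-ray swaps passing only through elements of order $\ge n$, to some $h\ge_H w$.

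\textbf{The direction $(2)\Rightarrow(1)$.} I would prove the contrapositive by an explicit tight-order witness. Assuming $\neg(1)$, choose $h_0\in H\setminus\{0\}$ of minimal degree among elements with $\supp w\not\subseteq\supp h$, so $\deg h_0<m$, and pick $i_0\in\supp w\setminus\supp h_0$; then $\deg h_0$ equals the minimal degree $\delta_{i_0}$ of a nonzero element avoiding the coordinate $i_0$. For each $n$ set $g_n=(n+1)h_0$, so $\operatorname{ord}(g_n)\ge n+1$ and $\sigma_{i_0}(g_n)=0<\sigma_{i_0}(w)$, whence $g_n\not\ge_H w$. Any extreme ray removable from $g_n$ is some $m\e_j$ with $j\ne i_0$, and $g_n-m\e_j$ still avoids $i_0$, so all its atoms have degree $\ge\delta_{i_0}$; therefore $\operatorname{ord}(g_n-m\e_j)\le(\deg g_n-m)/\delta_{i_0}=(n+1)-m/\deg h_0<n$ because $\deg h_0<m$. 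Hence $g_n$ admits \emph{no} order-$\ge n$ swap at all, its swap component is $\{g_n\}$, and $t^{g_n}\in\mm^{n+1}\setminus(\bb\mm^n+t^wR)$; so no $n$ works and (2) fails.

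\textbf{The direction $(1)\Rightarrow(2)$.} The usable form of (1) is: for every $i_0\in\supp w$, each nonzero $i_0$-avoiding element has $\supp w\not\subseteq\supp h$, hence degree $\ge m$. So whenever $\sigma_{i_0}(g)=0$ one has $\deg g\ge m\cdot\operatorname{ord}(g)$, i.e. the extreme rays (degree exactly $m$) are the order-cheapest atoms in the $i_0$-avoiding world. Given $g$ with $\operatorname{ord}(g)\ge n+1$ and $g\not\ge_H w$, I would use the finiteness of $\Ap(H,E)$ (a high-order element is never Apéry, so it has removable extreme rays, and reducing to $\Ap(H,E)$ writes $g=v+\sum_l k_l\,m\e_l$ with $\sum_l k_l$ large) together with this efficiency estimate to redistribute the rays by swaps: (1) guarantees that pushing mass into the coordinates of $\supp w$ costs at most one unit of order per ray, so the order stays $\ge n$ throughout; finally, once all multiplicities $k_l$ are large the target lies deep in the cone, so it belongs to $H$ by the conductor (finiteness of the normalization $\overline H$), and subtracting $w$ again lands in $H$, giving an element $\ge_H w$.

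\textbf{Main obstacle.} The heart of the matter, and the point at which \cite[Theorem 2.4]{HJS} used normality, is the control of $\operatorname{ord}(\cdot)$ under extreme-ray swaps: in $(2)\Rightarrow(1)$ one needs a clean \emph{upper} bound on the order after removing a ray (supplied above by $\delta_{i_0}=\deg h_0<m$), while in $(1)\Rightarrow(2)$ one needs the matching \emph{lower} bound ensuring the order budget survives every swap used to climb above $w$ -- exactly what the degree estimate $\deg g\ge m\cdot\operatorname{ord}(g)$ for $i_0$-avoiding $g$ provides. As a numerical check I would compute, from $P_M(t)=(1-t^{\deg w})P_R(t)$ and Theorem~\ref{symp_CM}, that $\mult(\bb,M)=\tfrac{\deg w}{m}\,\mult(\aa,R)$, and, using that $R$ is a domain (so $M$ is equidimensional and reduction is equivalent to $\mult(\bb,M)=\mult(\mm,M)$), confirm that this equality matches condition (1).
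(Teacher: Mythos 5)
Your reformulation of condition (2) as a connectivity statement for extreme-ray swaps anchored at elements of order $\ge n$ is sound, and your proof of $(2)\Rightarrow(1)$ is correct: the witness $g_n=(n+1)h_0$ together with the estimate $\operatorname{ord}(g_n-m\e_j)\le (n+1)-m/\deg h_0<n$ is essentially the same degree count the paper uses (there it appears as $n\deg h+m\le(n+1)\deg h$), just phrased on the monomial side.

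The direction $(1)\Rightarrow(2)$, however, has a genuine gap at the step ``the order stays $\ge n$ throughout.'' Removing an extreme ray can collapse the order drastically: if $h_0\in H$ has $\deg h_0<m$ (which condition (1) permits, provided $\supp w\subset\supp h_0$), then $g=(n+1)h_0$ may have $\operatorname{ord}(g)\ge n+1$ while its Ap\'ery decomposition $g=v+\sum_l k_l\,m\e_l$ has only $\sum_l k_l\approx (n+1)\deg h_0/m\ll n$ rays, and the anchor $g-m\e_j$ of any swap you wish to apply can have order far below $n$ (e.g.\ $m=5$, $h_0=(1,1)$, $g=5h_0=5\e_1+5\e_2$: removing $5\e_1$ leaves $5\e_2$, of order $1$). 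Your efficiency estimate $\deg g\ge m\cdot\operatorname{ord}(g)$ is only available for $g$ avoiding a coordinate of $\supp w$, and precisely the troublesome high-order elements --- those built from many summands of degree $<m$ --- avoid no such coordinate. For these elements no swap is usable and the argument must instead show that $t^g$ (or a bounded multiple of the relevant generator) already lies in $t^wR$. This is exactly how the paper proceeds: it proves, for each single generator $h$ of $H$, that some power $t^{(n+1)h}$ lies in $\bb\mm^n+t^wR$, splitting into the case $\supp w\subset\supp h$ (where a multiple of $nh-w$ lies in $H$ by normalization, so a power of $t^h$ falls into $t^wR$ with no swaps at all) and the case $\supp w\not\subset\supp h$ (where $\deg h\ge m$ forces $nh\in\N E$ to contain at least $n$ extreme rays, so the redistribution you describe really is anchored at order $\ge n-1$); a pigeonhole over the finitely many generators then yields $\mm^{N+1}\subseteq\bb\mm^N+t^wR$. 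Finally, the closing ``numerical check'' cannot substitute for this: the Hilbert-series computation gives $\mult(\bb,\K_R/t^wR)$ (or here $\mult(\bb,R/t^wR)$), but deducing that $\bb$ is a reduction of $\mm$ via Rees' theorem would require knowing $\mult(\mm,R/t^wR)$ equals it, which is precisely the statement in question, so that paragraph is circular.
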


\begin{proof}
Assume condition (2). Then we can find $n > 0$ such that $\mm^{n+1} + t^wR = \bb\mm^n + t^wR$.
Suppose the existence of an element $h \in H \setminus \{0\}$ for which $\supp w \not\subset \supp h$ and $\deg h < m$. 
We then select such an element $h$, ensuring that it has the smallest possible degree, denoted as $\deg h$. 
Consequently, $t^{(n+1)h} \in \mm^{n+1} \subset \bb\mm^n + t^wR$. 
We write $t^{(n+1)h} = x + t^wy$ for some $x \in \bb\mm^n$ and $y \in R$. Then,  thanks to $\supp w \not\subset \supp h$, we have $t^{(n+1)h} \notin t^wR$, which implies $x \ne 0$.
Furthermore, it is possible to find $h_1, \dots, h_n \in H \setminus \{0\}$ and $b \in E$ such that
$$
(n+1)h = h_1 + \cdots + h_n + b.
$$
For any $1 \le j \le n$, we deduce that $\supp w \not\subset \supp h_j$; otherwise, it would mean $\supp w \subset \supp h_j \subset \supp h$, contradicting our choice of $h$ due to the minimality of $\deg h$. 
Therefore, $\deg h_j \geq \deg h$. This leads to 
$$
n\deg h + m \leq \sum_{j=1}^n \deg h_j + m = \deg (n+1)h = (n+1)\deg h,
$$
which contradicts the selection of $h$. Consequently, condition (1) is established.

Conversely, assume condition (1). We claim the following.

\begin{claim}
For every $h \in H \setminus \{0\}$, 
there exists $n > 0$ such that $t^{(n+1)h} \in \bb \mm^n + t^wR$.
\end{claim}

The above claim implies condition (2), 
namely, $\bb$ is a reduction of $\mm$ with respect to $R/t^wR$.

\begin{proof}[Proof of Claim]
Let $h \in H \setminus \{0\}$. 
Initially, 
let's examine the case where $\supp w \subset \supp h$. 
In this case, 
we can identify $n > 0$ 
such that $\sigma_i(nh - w) \geq 0$ for each $1 \le i \le d$. 
Consequently, 
this allows for the existence of $n' > 0$ for which $n'(nh - w) \in H$, 
leading to $n'nh \in w + H$, and thus $t^{n'nh} \in t^wR$.

Next, 
we consider the case where $\supp w \not\subset \supp h$. 
By condition (1), we know that $\deg h \ge m$.
Since $H$ is simplicial, 
we have $nh \in \N E$ for sufficiently large $n$.
Recalling that $w + w^{\vee} \in \N E$ (see Definition \ref{type}), 
if $\deg nh>\deg(w + w^{\vee})$, then
$t^{nh} \in \bb + t^wR$. 
Subsequently, 
we may select an integer $n > 0$ ensuring both 
$t^{nh} \in \bb + t^wR$ and $nh \in \N E$. 
Since $\deg h \ge m$, 
we have $\deg nh = cm$ for some $c \ge n$. 
Therefore, we deduce that $t^{nh} \in \bb\mm^{n-1} + t^wR$.
\end{proof}
This completes the proof of Proposition \ref{almoststandard}.
\end{proof}

\begin{defn}\label{def_slim}
Let $(H,E)$ be an orthogonal semigroup.
If the conditions of Proposition~\ref{almoststandard}
are satisfied for any $w\in H\setminus\{0\}$, 
we say that $H$ is a \wdef{slim} semigroup,
after the definition in \cite{HJS}.
\end{defn}

\begin{rem}\label{rem_slim}
By this definition,
$H$ is slim if and only if
$\bb = (t^b-t^{b'} \mid b,b' \in E)$ 
is a reduction with respect to $R/t^wR$
for $w\in\relint H$.
This is also equivalent to the condition
$\min\{\deg h \,|\, h\ne0, h\in H\setminus\relint H\}\geq m$
holds.
Hence any orthogonal semigroup of rank $2$ is slim.
It should be noted that the assumption `orthogonal' is not included in the original definition by \cite{HJS}. However, we will later demonstrate that this difference is not significant in Proposition \ref{orth_semi2}.
\end{rem}

\begin{ex}
Let $H$ be a simplicail semigroup defined in \cite[Example 2.8]{HJS}.
Applying Lemma~\ref{orth_semi} to $H$,
we have an orthogonal semigroup
\[
H'=\begin{pmatrix}
5 & 0 & 0 & 1 & 2 \\
0 & 5 & 0 & 3 & 1 \\
0 & 0 & 5 & 0 & 0
\end{pmatrix},
\] isomorphic to $H$.
Then
\[
\min\{\deg h \,|\, h\ne0, h\in H'\setminus\relint H'\}=3<5
\]
implies that $H'$ is not slim (see Remark~\ref{rem_slim}).
\end{ex}

\begin{prop}[{cf. \cite[Theorem 3.2]{HJS}}]\label{slim}
Let $(H, E)$ be a slim semigroup 
such that $R=\fd[H]$ is Cohen-Macaulay with the graded maximal ideal $\mm$.
We put $\bb=(t^b-t^{b'} \mid b,b' \in E)$ and take $v\in\omega_H$.
Then $\bb$ is a reduction of $\mm$ with respect to $\K_R/t^vR$,
thus $\mult(\K_R/t^vR)=\mult(\mm, \K_R/t^vR) =\mult(\bb, \K_R/t^vR)$
(note that the multiplicity is defined in  Definition~\ref{def_Handmult}).
\end{prop}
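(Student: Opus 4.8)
The plan is to establish the reduction statement; the two multiplicity equalities then follow immediately, since $\mult(\mm,\cdot)=\mult(\bb,\cdot)$ holds for any reduction $\bb$ of $\mm$ (the first bullet of Definition~\ref{def_Handmult}) and $\bb$ is a parameter ideal for $\K_R/t^vR$ by the remark preceding Proposition~\ref{Hilbert}. Because $\fd[G(H)]$ is a domain, multiplication by $t^v$ is injective, so $\K_R/t^vR$ is a finitely generated $R$-module of dimension $d-1$, and it suffices to produce some $n>0$ with $\mm^{n+1}(\K_R/t^vR)=\bb\mm^n(\K_R/t^vR)$. Rather than manipulate monomials directly, I would reduce this module statement to the ring statement of Proposition~\ref{almoststandard} by means of a filtration.

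First I would use that, by Definition~\ref{canonical}, $\K_R$ is generated as an $R$-module by the finite set $\{f_s:=t^{-s+\sum_{b\in E}b}\mid s\in\Soc(H,E)\}$. Enumerating $\Soc(H,E)=\{s_1,\dots,s_r\}$, I set $M_j$ to be the image in $\K_R/t^vR$ of $\sum_{i\le j}Rf_{s_i}$, so that $0=M_0\subseteq M_1\subseteq\cdots\subseteq M_r=\K_R/t^vR$ is a filtration whose successive quotients are cyclic, say $M_j/M_{j-1}\cong R/J_j$ with $J_j=\Ann_R(\overline{f_{s_j}})$. The key observation is that each $J_j$ contains a monomial $t^h$ with $h\in H\setminus\{0\}$: indeed $g:=v+s_j-\sum_{b\in E}b\in G(H)$, so writing $g=p-q$ with $p,q\in H$ one finds $h\in H\setminus\{0\}$ with $h-g\in H$, that is $h-s_j+\sum_{b\in E}b-v\in H$, i.e. $t^hf_{s_j}\in t^vR$. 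Hence $t^h\in J_j$ and $M_j/M_{j-1}$ is a quotient of $R/t^hR$.

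Next I would invoke Proposition~\ref{almoststandard}: since $H$ is slim, for this $h\in H\setminus\{0\}$ the ideal $\bb$ is a reduction of $\mm$ with respect to $R/t^hR$. As reductions are inherited by quotient modules (apply the surjection to $\mm^{n+1}(R/t^hR)=\bb\mm^n(R/t^hR)$), $\bb$ is a reduction of $\mm$ with respect to each subquotient $M_j/M_{j-1}$. Finally, reductions are inherited along short exact sequences: if $\bb$ is a reduction of $\mm$ with respect to both ends of $0\to M'\to M\to M''\to0$, then also with respect to $M$. Applying this up the filtration by induction on $j$ yields that $\bb$ is a reduction of $\mm$ with respect to $\K_R/t^vR$, as required.

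The two module-theoretic inputs are where care is needed. Inheritance under quotients is immediate, but inheritance along a short exact sequence is the only non-formal ingredient; I would justify it by passing to Rees algebras, observing that ``$\bb$ is a reduction of $\mm$ with respect to $M$'' is equivalent to finite generation of $\bigoplus_{n\ge0}\mm^nM$ over the Noetherian ring $R[\bb t]$, and then combining the given finiteness for $M''$ with an Artin--Rees argument that sandwiches $\mm^nM\cap M'$ between $\mm^nM'$ and $\mm^{n-c}M'$ to deduce finiteness for $M$. The main obstacle is thus organizational rather than computational: arranging the filtration so that every subquotient is annihilated by a genuine monomial $t^h$ with $h\in H\setminus\{0\}$ (which is exactly where $v\in\omega_H\subseteq G(H)$, possibly lying outside $H$, must be handled), after which slimness enters only through Proposition~\ref{almoststandard} applied to the cyclic pieces.
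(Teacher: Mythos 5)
Your argument is correct, but it is organized differently from the paper's. The paper first translates $\omega_H$ into the semigroup: it picks $u\in H$ with $u+\omega_H\subseteq\relint H$, so that $I=(t^h)_{h\in u+\omega_H}$ is an honest monomial ideal of $R$ and $\K_R/t^vR\cong I/t^{u+v}R$ sits as a \emph{submodule} of $R/t^{u+v}R$; since $u+v\in\relint H\subseteq H\setminus\{0\}$, slimness and Proposition~\ref{almoststandard} give the reduction for $R/t^{u+v}R$, and the reduction property descends to submodules by \cite[Exercise 4.6.11]{BH} (finite generation of $\bigoplus_n\mm^nM$ over the Noetherian ring $R[\bb t]$ passes to $R[\bb t]$-submodules). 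You instead avoid the shift $u$ entirely: you filter $\K_R/t^vR$ by the images of the socle generators, observe that each cyclic subquotient is killed by some monomial $t^h$ with $h\in H\setminus\{0\}$ (this is where you absorb the fact that $v$ and the generators of $\omega_H$ live only in $G(H)$), apply Proposition~\ref{almoststandard} to each $R/t^hR$, and then propagate the reduction property up the filtration via quotient inheritance plus an extension lemma proved by Rees-algebra finiteness and Artin--Rees. Both routes funnel through the same essential input (slimness via Proposition~\ref{almoststandard}) and the same underlying Rees-module finiteness; the paper's version is shorter because submodule inheritance is a single citation, while yours trades the construction of the shift $u$ (and the verification $u+\omega_H\subseteq\relint H$) for the extension step, which you correctly identify as the only non-formal ingredient and justify adequately. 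One cosmetic caution: your lemma needs the reduction for $R/t^{h_j}R$ for \emph{varying} $h_j\in H\setminus\{0\}$, which is exactly what slimness (the conditions of Proposition~\ref{almoststandard} holding for \emph{all} $w\in H\setminus\{0\}$) supplies, so no gap arises there.
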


\begin{proof}
We first note that $\bb$ is a reduction of $\mm$ 
with respect to $I / (t^wR \cap I)$ 
for every ideal $I$ of $R$ and every $w \in \relint H$. 
It follows from \cite[Exercise 4.6.11]{BH},
since $I / (t^wR \cap I)\subset R/t^wR$
and since $\bb$ is a reduction of $\mm$ with respect to $R/t^wR$
by Proposition~\ref{almoststandard}.
To prove the proposition, 
consider the monomial ideal $I = (t^h)_{h \in u + \omega_H}$ for some $u \in H$ such that 
\[
u + \omega_H = u + \bigcup_{w \in \Soc H} \left(\left(-w + \sum_{b \in E}b\right) + H\right) \subseteq \relint H.
\]
Then $\bb$ is a reduction of $\mm$ 
with respect to $I / t^{u+v}R \cong t^u \K_R / t^{u+v}R \cong \K_R/t^vR$ 
as desired.
\end{proof}

\section{AG semigroups}

\subsection{A short survey on almost symmetric numerical semigroups and Nari's characterization}

In this subsection, we recall the almost symmetry in numerical semigroups introduced by V. Barucci-R. Fr\"{o}berg \cite{BF}.

Let $H$ be a numerical semigroup, which is a simplicial semigroup of rank $1$.
For $0 < h \in H$, the Ap\'ery set $\Ap(H,h)$ of $H$ with respect to $h$ is defined as
\[
\Ap(H,h) = \{h' \in H \,|\, h' -h \notin H\}.
\]
This is exactly the same as $\Ap(H, \{ h \})$ in Definition \ref{type}.

An integer $f$ is called a \wdef{pseudo-Frobenius number} of $H$, 
if $f \notin H$ but $f + h \in H$ for all $0 < h \in H$. 
We put 
\[\PF(H) = \{f \in \mathbb{Z} \,|\, 
\text{$f$ is a pseudo-Frobenius number of $H$}\}.
\]
It is well-known that, for $0 < h \in H$, 
\[
\PF(H) = \{h' - h \,|\, 
\text{$h'$ is maximal in $\Ap(H,h)$ with respect to the order $\leq_H$}
\}.
\]
The maximum number of $\PF(H)$ is called 
the \wdef{Frobenius number} of $H$, denoted by $\Fr(H)$.
Notice that, in this case, 
\[
\Soc(H,\{h\}) = \{f + h \,|\, f \in \PF(H)\}
\] 
for each $h \in H$. 
Therefore we get $\omega_H = \{-f + h\,|\, f \in \PF(H), h \in H\}$.

\begin{defn}[\cite{BF}]
We say that $H$ is \wdef{almost symmetric}, 
if $h + x + \Fr(H) \in H$ for all $h \in H$ and $x \in \omega_H$.
\end{defn}

\begin{rem}
Suppose that $H$ is symmetric, 
i.e., for all $z \in \Z$, $z \in H$ 
if and only if $\Fr(H) - z \notin H$.
Then, we get clearly that 
$\omega_H= \{-\Fr(H) + h \,|\, h \in H\}$. 
Hence $H$ is almost symmetric. 
\end{rem}

The following is a characterization of 
the almost symmetry of numerical semigroup rings 
in terms of $\PF(H)$ given by H. Nari.

\begin{thm}[{\cite[Theorem 2.4]{Nari13}}]\label{Nari}
Let $\PF(H) = \{f_1 < f_2 < \cdots < f_r = \Fr(H)\}$. 
The following are equivalent:
\begin{enumerate}
\item 
$H$ is almost symmetric,
\item 
$f_i + f_{r-i} = f_r$ for all $1 \le i \le r-1$.
\end{enumerate}
\end{thm}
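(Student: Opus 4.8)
The plan is to prove the equivalence of Nari's two conditions by translating everything into statements about the Apéry set $\Ap(H,\{h\})$ for a fixed $0 < h \in H$ (say $h$ the smallest positive element of $H$), and exploiting the natural duality $f \mapsto \Fr(H) - f$ on $\PF(H)$. Writing $w_i = f_i + h$ for the maximal elements of $\Ap(H,h)$ corresponding to each $f_i \in \PF(H)$, the almost symmetry condition $h' + x + \Fr(H) \in H$ for all $h' \in H$, $x \in \omega_H$ should first be rephrased. Since $\omega_H = \{-f + h' \mid f \in \PF(H),\ h' \in H\}$, the defining inequality reduces, after clearing the $h'$ contribution (it suffices to test on generators), to the statement that for every $f \in \PF(H)$ and every $f' \in \PF(H)$ one has $f' - f + \Fr(H) \in H$ whenever $f' - f + \Fr(H) \ne \text{(a pseudo-Frobenius obstruction)}$. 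More precisely, I expect the cleanest route is to show almost symmetry is equivalent to: for each $f \in \PF(H) \setminus \{\Fr(H)\}$, the element $\Fr(H) - f$ again lies in $\PF(H)$, i.e. the map $\varphi : f \mapsto \Fr(H) - f$ sends $\PF(H) \setminus \{\Fr(H)\}$ into $\PF(H) \setminus \{\Fr(H)\}$.

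\emph{Step one.} I would establish that $H$ is almost symmetric if and only if $\Fr(H) - f \in \PF(H)$ for every $f \in \PF(H) \setminus \{\Fr(H)\}$. For the forward direction, fix $f \in \PF(H)\setminus\{\Fr(H)\}$ and set $g = \Fr(H) - f$. Since $f \notin H$, $g \notin H$ would follow from symmetry-type arguments; the point is to verify $g$ is pseudo-Frobenius, i.e. $g \notin H$ but $g + h' \in H$ for all $0 < h' \in H$. Applying the almost symmetry inequality with $x = -f + 0 \cdot$ suitably chosen and $h'$ ranging over $H$ gives $g + h' = -f + \Fr(H) + h' \in H$ for all $h' \in H$, which is exactly the pseudo-Frobenius divisibility condition; that $g \notin H$ comes from $f \ne \Fr(H)$ together with the maximality of $\Fr(H)$. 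The reverse direction reassembles these.

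\emph{Step two.} I would then show that the involution $\varphi(f) = \Fr(H) - f$ mapping $\PF(H)\setminus\{\Fr(H)\}$ to itself, which is order-\emph{reversing} and fixes no point in the appropriate sense, forces the pairing $f_i + f_{r-i} = f_r$. Because $\varphi$ is a decreasing bijection of the totally ordered set $\{f_1 < \cdots < f_{r-1}\}$ to itself, it must send $f_i$ to $f_{r-i}$; that is precisely $\Fr(H) - f_i = f_{r-i}$, giving condition (2). Conversely, condition (2) exhibits $\varphi$ as such an involution, so $\Fr(H) - f_i = f_{r-i} \in \PF(H)$ for each $i$, yielding the hypothesis of Step one and hence almost symmetry.

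\emph{Main obstacle.} The subtle point, and the step I expect to require the most care, is the forward implication in Step one: deducing that $g = \Fr(H) - f$ actually lies \emph{outside} $H$ (not merely that it is divisible into $H$ by every positive element). The divisibility $g + h' \in H$ for all $h' > 0$ follows almost immediately from the almost symmetry inequality, but ruling out $g \in H$ requires using that $f$ is genuinely a pseudo-Frobenius number distinct from the Frobenius number, and invoking the characterization of $\PF(H)$ via maximal elements of $\Ap(H,\{h\})$ under $\leq_H$. I would handle this by a clean contradiction: if $g \in H$ then $f = \Fr(H) - g$ would satisfy $\Fr(H) - f = g \in H$, and combined with $f \notin H$ this contradicts the defining property of $\Fr(H)$ as the largest gap together with the pseudo-Frobenius condition on $f$. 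Everything else is bookkeeping with the order $\leq_H$ and the translation dictionary between $\PF(H)$, $\Soc(H,\{h\})$, and $\omega_H$ recorded in the excerpt.
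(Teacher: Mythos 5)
Your proof is correct, but you should be aware that the paper itself offers no proof of Theorem~\ref{Nari}: the statement is quoted from Nari's paper, and the only argument in this direction that the authors supply is indirect, via the higher-rank generalization (Theorem~\ref{simplicialAG}) together with the example on numerical semigroups that follows it, where the socle pairing $h+h'=w+b$ specializes to $f+f'=\Fr(H)$. Your route is the direct combinatorial one: reduce almost symmetry to the statement that $\varphi(f)=\Fr(H)-f$ maps $\PF(H)\setminus\{\Fr(H)\}$ into itself, and then observe that $\varphi$, being an injective order-reversing self-map of a finite chain, is a bijection sending $f_i$ to $f_{r-i}$. Both steps are sound, and your ``main obstacle'' is resolved correctly, though you should state the contradiction in its clean form: if $g=\Fr(H)-f\in H$ with $f\ne\Fr(H)$, then $g\in H\setminus\{0\}$ and the pseudo-Frobenius property of $f$ forces $\Fr(H)=f+g\in H$, which is absurd. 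Two small points to make explicit. First, the paper's displayed definition of almost symmetry lets $h$ range over all of $H$ including $0$; taken literally (with $h=0$ and $x=-f$) it would demand $\Fr(H)-f\in H$, which already fails for pseudo-symmetric semigroups, so you must --- and implicitly do --- use the Barucci--Fr\"oberg form with $h\in H\setminus\{0\}$. Second, note that $\varphi(f)\ne\Fr(H)$ because $f\ge 1$, so $\varphi$ genuinely lands in $\PF(H)\setminus\{\Fr(H)\}$; injectivity and finiteness then give the bijection your Step two needs. Compared with the paper's machinery (the multiplicity of $\K_R/t^vR$, the coefficients $c_{w,h}$, and the Ap\'ery/socle bookkeeping of Proposition~\ref{Hilbert}), your argument is more elementary and self-contained, but it is confined to rank one, which is exactly the limitation Theorem~\ref{simplicialAG} is designed to remove.
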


Later we will give a generalization of Theorem \ref{Nari}.

\subsection{Almost Gorenstein property for graded rings}

We recall the definition of 
\AGG rings
given by \cite{GTT}, under our general settings.

\begin{defn}[{cf. \cite[Definition 2.1]{GTT}}]\label{Ulrich module}
Let $R = \bigoplus_{n \ge 0} R_n$ be 
a positively graded Cohen-Macaulay ring with $R_0=k$ a field. 
Let $\mm$ be the graded maximal ideal of $R$ 
and $M$ be a finitely generated graded $R$-module.
We say that $M$ is an \wdef{Ulrich $R$-module}, 
if $M$ is a Cohen-Macaulay graded module and 
generated by $\mult(\mm, M)$ homogeneous elements.
Note that we do not assume $\dim_R M=\dim R$ and that
the concept of multiplicity $\mult(M)=\mult(\mm, M)$ is defined in Definition~\ref{def_Handmult}.
\end{defn}

\begin{rem}
Notice that the notion of Ulrich modules in \cite[Definition 2.1]{GTT} 
was defined for finitely generated modules over Noetherian local rings. 
Definition \ref{Ulrich module} is a rephrasing of 
\cite[Definition 2.1]{GTT} for the graded version. See \cite{HJS} also.
\end{rem}

\begin{defn}[{\cite[Definition 8.1]{GTT}}]\label{AGG}
Let $R = \bigoplus_{n \ge 0}R_n$ be a Cohen-Macaulay graded ring 
with $k=R_0$ a field. 
Assume that $R$ possesses the graded canonical module $\K_R$. 
Let $\mm$ be the graded maximal ideal of $R$.
We say that $R$ is an 
\wdef{\AGG ring},
if there exists an exact sequence 
\[
0 \to R \to \K_R(-a(R)) \to C \to 0
\]
of graded $R$-modules such that $C$ is an Ulrich $R$-module.
This is equivalent to the existence of a homogeneous element $x\in\K_R$ 
whose degree is $-a(R)$ and $\mult(\mm, \K_R/xR) = \type(R)-1$. 
\end{defn}

Let $R$ be an \AGG ring 
and $\mm$ the graded maximal ideal of $R$. 
Then $R_{\mm}$ is an almost Gorenstein local ring.
The definition of almost Gorenstein local rings 
is also given in \cite{GTT} for Cohen-Macaulay local ring $A$, 
which, in the context of this paper, 
can be characterized by the existence of a specific element $x \in \K_A$, 
where $\K_A$ denotes the canonical module of $A$.
More explicitly, 
$\K_A/xA$ is an Ulrich module in the sense of \cite[Definition 2.1]{GTT}. 
For further detail, we refer to \cite[Section 3]{GTT}).

The concept of an Ulrich element $v \in \omega_H$ is defined in \cite{HJS} if $H$ is normal. 
We extend this definition to the case $H$ is not necessarily normal.

\begin{defn}\label{def_Uelement}
Let $(H, E)$ be a simplicial semigroup
and $\bb=(t^b-t^{b'} \mid b, b'\in E) \subset R=\fd[H]$.
Assume that $R$ is Cohen-Macaulay.
An element $v\in\omega_H$ is called an \wdef{Ulrich element},
if the equality 
\[
\mult(\bb, \K_R/t^vR)=\type H-1
\]
holds, where  $\type H$ is the type of $H$  (see Definition~\ref{type}) and
recall that $\type H=\type (R)$ holds
(Definition~\ref{canonical}).
We say that {\it $\omega_H$ has an Ulrich element} 
if an element   $v \in \omega_H$ is an Ulrich element.\par
If $H$ is normal, then this definition agrees with that of 
\cite[Definition 3.1]{HJS} 
since $\omega_H=\relint H\subset H$ if $H$ is normal.
\end{defn}

\begin{rem}
In the case $H$ is normal, 
according to \cite[Definition 3.1]{HJS},  
an element $v \in \omega_H$ is an Ulrich element if the equation
 $$\mult(\mm, \K_R/t^vR) = \type H - 1$$ is satisfied. 
 In the following theorem, we establish that an element $v \in \omega_H$
  is an Ulrich element in the sense of \cite{HJS}, provided $v$ is an 
  Ulrich element in our sense. 
The inverse implication holds when $H$ is a slim semigroup.
\par
We have no example of $v \in \omega_H$ such that $\K_R/t^vR$ is an Ulrich module
 and $\mult(\bb, \K_R/t^vR) > \mult(\mm, \K_R/t^vR)$.
If $H$ is such an example,
then the rank of $H$ must be greater than two,
since any affine semigroup of rank two is slim
(see Definition~\ref{def_slim}).
\end{rem}

We are now ready to prove the main theorem, 
which is a generalization of \cite[Theorem 3.2]{HJS} 
for the case where $H$ is not necessarily normal.

\begin{thm}\label{slim2}
Let $H$ be a simplicial semigroup such that $R=\fd[H]$ is Cohen-Macaulay.
If $\omega_H$ has an Ulrich element $v$, then $\K_R/t^vR$ is an Ulrich module.
The converse is true, if $H$ is slim.
\end{thm}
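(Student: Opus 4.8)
The plan is to unwind the definitions of Ulrich element and Ulrich module and connect the two multiplicities $\mult(\bb,\K_R/t^vR)$ and $\mult(\mm,\K_R/t^vR)$ using the results established in Section~3. Recall that by Definition~\ref{def_Uelement}, $v\in\omega_H$ is an Ulrich element precisely when $\mult(\bb,\K_R/t^vR)=\type H-1$, while by Definition~\ref{Ulrich module}, the module $\K_R/t^vR$ is Ulrich when it is Cohen-Macaulay and minimally generated by exactly $\mult(\mm,\K_R/t^vR)$ homogeneous elements. Since $\type H=\type(R)$ is the minimal number of generators of $\K_R$, and killing the single element $t^v$ drops the number of generators by at most one, the module $\K_R/t^vR$ is generated by at most $\type H-1$ elements (and by exactly that many if $t^v$ is part of a minimal generating set, which holds because $\deg v=-a(R)$ so $t^v$ sits in the bottom degree). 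So the whole statement hinges on comparing the number of generators with the multiplicity.

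First I would prove the forward direction. By Lemma~\ref{orth_semi} I may replace $H$ by an isomorphic orthogonal semigroup, so Proposition~\ref{Hilbert} applies. The key inequality, from Proposition~\ref{Hilbert}(1)(d), is
\[
\mult(\bb,\K_R/t^vR)\;\geq\;\mult(\mm,\K_R/t^vR)\;\geq\;\type H-1.
\]
If $v$ is an Ulrich element the leftmost term equals $\type H-1$, so the chain forces $\mult(\mm,\K_R/t^vR)=\type H-1$ as well. Combined with the fact that $\K_R/t^vR$ needs at most $\type H-1$ generators, this gives that the number of minimal generators equals $\mult(\mm,\K_R/t^vR)$; it remains to check Cohen-Macaulayness of $\K_R/t^vR$, which follows since $\K_R$ is a maximal Cohen-Macaulay $R$-module and $t^v$ is a nonzerodivisor on it, making $\K_R/t^vR$ Cohen-Macaulay of dimension $d-1$. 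Thus $\K_R/t^vR$ is an Ulrich module.

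For the converse, assume $H$ is slim and that $\K_R/t^vR$ is Ulrich. Here I invoke Proposition~\ref{slim}, which tells us that for a slim semigroup $\bb$ is a reduction of $\mm$ with respect to $\K_R/t^vR$, and hence
\[
\mult(\bb,\K_R/t^vR)=\mult(\mm,\K_R/t^vR).
\]
So for slim $H$ the two multiplicities coincide, collapsing the distinction between our notion of Ulrich element and that of \cite{HJS}. Being Ulrich, $\K_R/t^vR$ is generated by $\mult(\mm,\K_R/t^vR)$ elements; since it is also generated by at most $\type H-1$ elements and $\mult(\mm,\K_R/t^vR)\geq\type H-1$ by Proposition~\ref{Hilbert}(1)(d), both must equal $\type H-1$. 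Therefore $\mult(\bb,\K_R/t^vR)=\type H-1$, i.e., $v$ is an Ulrich element in our sense. The main obstacle I anticipate is the bookkeeping around minimal generators: I must argue carefully that $t^v$ is genuinely part of a minimal homogeneous generating set of $\K_R$ (so the generator count drops by exactly one), which is where the degree condition $\deg v=-a(R)$ and the graded Nakayama lemma enter; without this, the chain of equalities linking the generator count to $\type H-1$ could fail to be tight.
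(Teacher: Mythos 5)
Your proof takes the same route as the paper's: the forward direction is the collapse of the chain $\type H-1\le\mult(\mm,\K_R/t^vR)\le\mult(\bb,\K_R/t^vR)$ from Proposition~\ref{Hilbert}(1)(d), and the converse is an immediate application of Proposition~\ref{slim}. Two small corrections to your bookkeeping: Definition~\ref{def_Uelement} imposes no degree condition on an Ulrich element, and Example~\ref{GTT_ex} exhibits one of degree $3>-a(R)=2$, so you cannot argue that $t^v$ is a minimal generator of $\K_R$ via ``$\deg v=-a(R)$''; instead close the generator count with the standard inequality $\mu(M)\le\mult(\mm,M)$ for a Cohen--Macaulay module $M$, which together with $\mu(\K_R/t^vR)\ge\type H-1$ gives exactly what is needed once the multiplicities collapse. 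Also note that $\K_R/t^vR$ is the cokernel of $R\xrightarrow{\,t^v\,}\K_R$, not $\K_R/t^v\K_R$, so its Cohen--Macaulayness in dimension $d-1$ comes from the depth count on that short exact sequence rather than from $t^v$ being a nonzerodivisor on $\K_R$.
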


\begin{proof}
If $\omega_H$ has an Ulrich element $v$, thanks to Proposition~\ref{Hilbert} $(1)$ (d), we have
\[
\type H-1\leq\mult(\mm, \K_R/t^vR)\leq\mult(\bb, \K_R/t^vR)=\type H-1
\]
and hence $\K_R/t^vR$ is an Ulrich module
(note that the multiplicity is defined in  Definition~\ref{def_Handmult}).
Conversely, suppose that $H$ is slim.
Let $v \in \K_R$ such that $K/t^vR$ is an Ulrich $R$-module. 
Then, since $\mult(\mm, \K_R/t^vR)=\mult(\bb, \K_R/t^vR)$ 
by Proposition \ref{slim}, $v$ is an Ulrich element.
\end{proof}

\begin{prop}\label{orth_semi2}
Let $H$ be a simplicial semigroup
such that $\fd[H]$ is Cohen-Macaulay.
Then the following are equivalent:
\begin{enumerate}
\item
$\omega_H$ has an Ulrich element,
\item
there exists an orthogonal semigroup $H'$ isomorphic to $H$,
such that $\omega_{H'}$ has an Ulrich element
whose degree is $-a(\fd[H'])$.
\end{enumerate}
\end{prop}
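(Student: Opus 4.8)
The plan is to treat the two implications asymmetrically. The implication $(2)\Rightarrow(1)$ is essentially formal: if $H'$ is an orthogonal semigroup isomorphic to $H$ and $\omega_{H'}$ carries an Ulrich element $v'$, then transporting $v'$ back along the isomorphism produces an Ulrich element of $\omega_H$, and the degree hypothesis in $(2)$ is simply discarded. The real content lies in $(1)\Rightarrow(2)$: starting from an arbitrary Ulrich element $v\in\omega_H$, I would first invoke Lemma~\ref{orth_semi} to fix an orthogonal semigroup $H'$ together with a semigroup isomorphism $\iota\colon H\to H'$, transport $v$ to $\iota(v)\in\omega_{H'}$, and then use the monotonicity of multiplicities from Proposition~\ref{Hilbert}(2) to show that $\iota(v)$ is automatically of degree $-a(\fd[H'])$.

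The key auxiliary step I would isolate is that being an Ulrich element is invariant under semigroup isomorphism. Write $R=\fd[H]$ and $R'=\fd[H']$. Concretely, $\iota$ extends to a group isomorphism $G(H)\cong G(H')$ and hence to a ring isomorphism $\widetilde\iota\colon R\to R'$ carrying the set of extreme rays $E$ to $E'$, the socle $\Soc(H,E)$ to $\Soc(H',E')$, the Ap\'ery set to $\Ap(H',E')$, and the partial order $\leq_H$ to $\leq_{H'}$; consequently $\widetilde\iota$ sends $\omega_H$ onto $\omega_{H'}$, the ideal $\bb=(t^b-t^{b'}\mid b,b'\in E)$ onto its analogue $\bb'$, and the module $\K_R/t^vR$ isomorphically onto $\K_{R'}/t^{\iota(v)}R'$. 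Since Cohen-Macaulayness, the type, and the multiplicity $\mult(\bb,-)$ are all preserved by a ring isomorphism (the last because $\mult(\bb,M)$ is computed from the lengths $\dim_\fd M/\bb^nM$, which are insensitive to the grading), the equation $\mult(\bb,\K_R/t^vR)=\type H-1$ transfers verbatim to $\mult(\bb',\K_{R'}/t^{\iota(v)}R')=\type H'-1$. Thus $\iota(v)$ is an Ulrich element of $\omega_{H'}$, and in particular $R'$ is Cohen-Macaulay so that Proposition~\ref{Hilbert} is available for $H'$.

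It remains to pin down the degree of $\iota(v)$. Choose any element $v_0\in\omega_{H'}$ of minimal degree, namely $\deg v_0=-a(\fd[H'])$; such a $v_0$ exists by the formula for $a(\fd[H'])$ in Definition~\ref{canonical}. Since $\deg\iota(v)\geq\deg v_0$, Proposition~\ref{Hilbert}(2) gives $\mult(\bb',\K_{R'}/t^{\iota(v)}R')\geq\mult(\bb',\K_{R'}/t^{v_0}R')$, while Proposition~\ref{Hilbert}(1)(d) forces $\mult(\bb',\K_{R'}/t^{v_0}R')\geq\type H'-1$. Because the left-hand side equals $\type H'-1$, all three quantities coincide, and the equality clause of Proposition~\ref{Hilbert}(2) then yields $\deg\iota(v)=\deg v_0=-a(\fd[H'])$. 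Hence $\iota(v)$ is the desired Ulrich element of degree $-a(\fd[H'])$, completing $(1)\Rightarrow(2)$.

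The main obstacle I anticipate is bookkeeping rather than conceptual: one must be careful that the isomorphism $\iota$ does \emph{not} preserve degrees, since the grading depends on the chosen embedding into $\N^d$. Consequently the degree of the transported element $\iota(v)$ cannot be read off from $v$, and its value $-a(\fd[H'])$ has to be recovered intrinsically on the orthogonal side --- which is exactly what the monotonicity in Proposition~\ref{Hilbert}(2) accomplishes, and why the passage to an orthogonal model (where that proposition is available) is essential.
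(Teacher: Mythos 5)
Your proposal is correct and follows essentially the same route as the paper: transport the Ulrich element along the isomorphism $\iota$ from Lemma~\ref{orth_semi}, then combine the lower bound of Proposition~\ref{Hilbert}(1)(d) at a minimal-degree element of $\omega_{H'}$ with the monotonicity and equality clause of Proposition~\ref{Hilbert}(2) to force $\deg\iota(v)=-a(\fd[H'])$. You simply spell out the isomorphism-invariance of the Ulrich property in more detail than the paper does (and, incidentally, state the degree inequality $\deg\iota(v)\geq\deg v_0$ in the correct direction, where the paper's text has a sign slip).
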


\begin{proof}
To prove $(1)$ implies $(2)$, assume that $\omega_H$ has an Ulrich element $v$.
Let us consider the map $\iota : \Z^d\to\Z^d$ as defined in the proof of Lemma~\ref{orth_semi}, and set $H' = \iota(H)$.
Then, since $\iota$ induces a $\fd$-algebra isomorphism from $\fd[H]$ to $\fd[H']$,
we have $\K_{\fd[H']}/t^{\iota(v)}\K_{\fd[H']}
\cong\K_{\fd[H]}/t^{v}\K_{\fd[H]}$, 
and hence $\iota(v)$ is an Ulrich element.
We now prove $\deg\iota(v)=-a(\fd[H'])$.
Put $R=\fd[H']$ and let $\bb$ be the ideal defined in Proposition~\ref{Hilbert}.
Taking $w\in\omega_{H'}$ with $\deg w=-a(R)$ and applying Proposition~\ref{Hilbert} $(2)$, since $-a(R) = \deg w \ge \deg \iota(v)$
we have
\[
\type H'-1
\leq\mult(\bb, \K_R/t^wR)
\leq\mult(\bb, \K_R/t^{\iota(v)}R)
=\type H'-1,
\]
which clearly implies $\mult(\bb, \K_R/t^wR) = \mult(\bb, \K_R/t^{\iota(v)}R)$. 
Consequently, again by Proposition~\ref{Hilbert} $(2)$, we have $\deg\iota(v)=\deg w=-a(R)$ as wanted.
The converse implication can be proven by a similar reasoning.
\end{proof}

\begin{defn}
Let $H$ be a simplicial semigroup
such that $\fd[H]$ is Cohen-Macaulay.
We say that $H$ is an \wdef{AG semigroup},
if the equivalent conditions in Proposition~\ref{orth_semi2} are satisfied.
\end{defn}

\begin{rem}\label{orth_semi3}
By Proposition~\ref{orth_semi2},
if a simplicial semigroup $H$ is an AG semigroup,
then there exists a semigroup $H'$ isomorphic to $H$
such that $\fd[H']$ is an \AGG ring.
Further if $H$ is orthogonal,
then $\fd[H]$ is an \AGG ring,
by the proof of Proposition~\ref{orth_semi2}.
Further assume that $\fd[H]$ has a standard grading,
i.e. generated by the elements of the same degree.
By \cite[Corollary 2.7]{Hi},
$\fd[H]$ has an Ulrich element
in the sense of \cite{HJS},
if it is an \AGG ring.
Therefore,
by Theorem~\ref{slim2},
$H$ is an AG semigroup if and only if
$\fd[H]$ is an \AGG ring,
in this case.
\end{rem}
The following example shows that
the degree of Ulrich elements 
is not necessarily $-a(\fd[H])$
if $H$ is not orthogonal.

\begin{ex}[{\cite[Example 8.8]{GTT}}]\label{GTT_ex}
Let
\[
H=
\begin{pmatrix}
1 & 3 & 3 & 3 \\
0 & 3 & 1 & 2
\end{pmatrix}
\quad
\text{and}
\quad
E=
\begin{pmatrix}
1 & 3  \\
0 & 3 
\end{pmatrix}.
\]
Then we know that $R=\fd[H]$ is not an \AGG  ring
by \cite[Example 8.8]{GTT}.
Note that $\binom12=-\binom31+\binom10+\binom33\in\omega_H$ 
is an Ulrich element,
whose degree is $3>-a(R)=2$.
Moreover, we can check that $\Soc(H, E)=\left\{\binom31, \binom32\right\}$
and $2\binom32=\binom31+\binom33$. Hence $c_{\binom31, \binom32}=1$.
Consequently, we get the equality
\[
\mult(\bb, \K_R/t^{\binom12}R)
=c_{\binom31, \binom32}=1=\type H-1
\]
(see Proposition~\ref{Hilbert} (1) (c)), 
which implies that $H$ is an AG semigroup.

Now, to make the homogeneous form of $H$, we consider the injection
$\iota : \Z^2\to\Z^2$ defined by the matrix
$\begin{pmatrix}3 & -3 \\ 0 & 1\end{pmatrix}$.
Then we obtain 
\[
H'=
\begin{pmatrix}
3 & 0 & 6 & 3 \\
0 & 3 & 1 & 2
\end{pmatrix}
\quad
\text{and}
\quad
E=
\begin{pmatrix}
3 & 0  \\
0 & 3 
\end{pmatrix}.
\]

$\iota\binom12=\binom32$ is an Ulrich element in $\iota(H)$
and $\fd[H']$ is an \AGG ring by Proposition~\ref{orth_semi2}.
We note that
the property
that the semigroup ring is
an \AGG ring
depends on the grading,
which is also pointed out in \cite[Remark 3.2]{EM2023}.
\end{ex}

\begin{rem}
Consider the following assertions.
\begin{enumerate}
\item
$R_{\mm}$ is an almost Gorenstein local ring,
\item 
$R$ is an \AGG ring 
(Definition~\ref{AGG}),
\item
$H$ is an AG semigroup,
\end{enumerate}
Then, in general, $(2) \Rightarrow (1)$ and $(3) \Rightarrow (1)$ hold
(see Proposition~\ref{orth_semi2}). 
However,
Example~\ref{GTT_ex} shows that
the condition $(1)$ does not implies $(2)$.
Later,
we see that $(2)$ does not always imply $(3)$ in Example~\ref{notmonomial}.
Hence $(1)$ does not implies $(3)$, too.

If $H$ is a numerical semigroup,
then the above three conditions are equivalent, 
and furthermore, 
the condition that $H$ is almost symmetric 
can also be added as an equivalent condition. 
Notice that any numerical semigroup $H$ is 
slim by Proposition~\ref{almoststandard}.
\end{rem}

The following theorem is a generalization of Theorem \ref{Nari}.

\begin{thm}\label{simplicialAG}
Let $(H, E)$ be a simplicial semigroup
such that $R=\fd[H]$ is Cohen-Macaulay.
Then $H$ is an AG semigroup
if and only if
there exists $w\in\Soc(H, E)$
such that
the following two conditions are satisfied:
\begin{enumerate}
\item For every $h \in \Ap(H,E)$, 
$h\in\Soc(H, E)\setminus\{w\}$
if and only if $h\not\leq_Hw$ and,
\item
for each $h\in\Soc(H, E)\setminus\{w\}$,
there exist $b\in E$ and $h'\in\Soc(H, E)$
such that $h+h'=w+b$.
\end{enumerate}
When the above conditions are satisfied,
$v=-w+\sum_{b\in E}b\in\omega_H$ is an Ulrich element of $H$.
\end{thm}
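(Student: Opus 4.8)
The plan is to reduce to the orthogonal case and then match the two conditions against Proposition~\ref{Hilbert}. First I would invoke Lemma~\ref{orth_semi} to replace $H$ by an isomorphic orthogonal semigroup of some order $m$. Both the property of being an AG semigroup (isomorphism-invariant by Proposition~\ref{orth_semi2}) and the existence of $w\in\Soc(H,E)$ satisfying (1) and (2) (which refers only to $\Ap$, $\Soc$, $\leq_H$ and $E$) are preserved by this isomorphism, and the Ulrich property of $-w+\sum_{b\in E}b$ transfers along it as well. Hence we may assume $H$ is orthogonal, and we write $R=\fd[H]$ and $\bb=(t^b-t^{b'}\mid b,b'\in E)$.

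The core claim is that, for a fixed $w\in\Soc(H,E)$, the element $v=-w+\sum_{b\in E}b\in\omega_H$ is an Ulrich element if and only if $w$ satisfies (1) and (2); this claim simultaneously yields the forward implication and the final assertion of the theorem. Here Proposition~\ref{Hilbert}(1)(d) is the main input: $\mult(\bb,\K_R/t^vR)=\type H-1$ holds exactly when
\[
\Ap(H,E)\setminus\{h\in\Ap(H,E)\mid h\leq_Hw\}=\Soc(H,E)\setminus\{w\}
\quad\text{and}\quad c_{w,h}=1\ \text{for all}\ h\in\Soc(H,E)\setminus\{w\}.
\]
The first equality is a verbatim restatement of condition (1), so it remains to show that, under (1), the condition ``$c_{w,h}=1$ for all $h\in\Soc(H,E)\setminus\{w\}$'' is equivalent to (2).

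The device I would use is the involution $\phi$ on $\Ap(H,E)$ sending $h$ to the unique $\phi(h)=h'\in\Ap(H,E)$ with $h+h'\in w+\N E$ provided by Proposition~\ref{Hilbert}(1)(a). Uniqueness gives $\phi\circ\phi=\mathrm{id}$, and since $m\,c_{w,h}=\deg h+\deg\phi(h)-\deg w$ is symmetric we get $c_{w,h}=c_{w,\phi(h)}$; combined with Proposition~\ref{Hilbert}(1)(b) (namely $c_{w,h}=0\iff h\leq_Hw$) this shows $\phi$ preserves $\{h\mid h\not\leq_Hw\}$. Under (1) that set equals $\Soc(H,E)\setminus\{w\}$, so $\phi$ restricts to an involution of $\Soc(H,E)\setminus\{w\}$. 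Now for $h\in\Soc(H,E)\setminus\{w\}$, because $H$ is orthogonal of order $m$ we have $c_{w,h}=1$ iff the element $u\in\N E$ with $h+\phi(h)=w+u$ is a single generator $b\in E$; since $\phi(h)\in\Soc(H,E)$, this is exactly the assertion that there are $b\in E$ and $h'=\phi(h)\in\Soc(H,E)$ with $h+h'=w+b$, i.e.\ condition (2). Conversely, (2) together with the uniqueness in Proposition~\ref{Hilbert}(1)(a) forces $h'=\phi(h)$ and $u=b$, so $c_{w,h}=\deg b/m=1$. This establishes the core claim.

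For the converse, suppose $H$ is an AG semigroup, so $\omega_H$ has an Ulrich element $v_0$. I would pick $w\in\Soc(H,E)$ of maximal degree and set $v=-w+\sum_{b\in E}b$, whose degree is $-a(R)$, the minimal degree occurring in $\omega_H$ (since $a(R)=-\min\{n\mid(\K_R)_n\neq0\}$). Then $\deg v_0\geq\deg v$, so Proposition~\ref{Hilbert}(2) gives $\mult(\bb,\K_R/t^{v_0}R)\geq\mult(\bb,\K_R/t^vR)$; as the left side equals $\type H-1$ and the right side is always $\geq\type H-1$ by Proposition~\ref{Hilbert}(1)(d), both equal $\type H-1$, so $v$ is Ulrich, and the core claim produces conditions (1) and (2) for this $w$. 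The main obstacle is the core claim, specifically upgrading the complement $h'=\phi(h)$ from membership in $\Ap(H,E)$ (all that Proposition~\ref{Hilbert}(1)(a) gives) to membership in $\Soc(H,E)$ as demanded by (2); this is precisely what the involution argument accomplishes, by exploiting the symmetry $c_{w,h}=c_{w,\phi(h)}$ together with condition (1).
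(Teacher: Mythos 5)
Your proposal is correct and follows essentially the same route as the paper: reduce to the orthogonal case, choose $w$ of maximal degree (so that $\deg v=-a(R)$ and Proposition~\ref{Hilbert}(2) transfers the Ulrich property to $v$), and translate Proposition~\ref{Hilbert}(1)(d) into conditions (1) and (2). Your involution argument via $\phi$ and the symmetry $c_{w,h}=c_{w,\phi(h)}$ merely makes explicit a step the paper's proof asserts without detail, namely why the complement $h'$ of $h\in\Soc(H,E)\setminus\{w\}$ again lies in $\Soc(H,E)$.
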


\begin{proof}
By Proposition~\ref{orth_semi2}, we may assume that $H$ is an orthogonal semigroup of order $m$.
Throughout this proof, we maintain the notation $c_{w,h}$ as in Proposition~\ref{Hilbert} $(1)$ (a) for each $w \in \Soc(H,E)$ and $h \in \Ap(H,E)$.
Take $w\in \Soc(H, E)$ such that $\deg w\geq\deg h$ for all $h\in\Soc(H, E)$, i.e., $w$ has the maximum degree in $\Soc(H,E)$.
Proposition~\ref{Hilbert} $(1)$ (d) establishes that $H$ is an AG semigroup if and only if
\[
\Ap(H, E)\setminus\{ h\in\Ap(H, E) \,|\, h\leq_Hw\}
=\Soc(H, E)\setminus\{w\}
\]
and $c_{w, h}=1$ for $h\in\Soc(H, E)\setminus\{w\}$.
This condition is equivalent to that, for each $h\in\Soc(H, E)\setminus\{w\}$,
there exist $h'\in\Soc(H, E)$ and $b\in E$ such that $h+h'=w+b$. 
Thus, the proof is complete.
\end{proof}

\begin{cor}
Let $(H_1, E_1)$ and $(H_2, E_2)$ be simplicial semigroups
of rank $r_1$ and $r_2$, respectively.
Assume that both $\fd[H_1]$ and $\fd[H_2]$ are Cohen-Macaulay.
The product $H=H_1\times H_2\subset\Z^{r_1+r_2}$ is an AG semigroup,
if and only if either
\begin{enumerate}
\item
$\type H_1= \type H_2=1$, or
\item
$H_1$ (resp. $H_2$) is an AG semigroup and $H_2=\N^{r_2}$
(resp. $H_1=\N^{r_1}$).
\end{enumerate}
\end{cor}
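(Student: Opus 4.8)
The plan is to reduce everything to the combinatorial criterion of Theorem~\ref{simplicialAG}, after recording the standard product formulas. Writing $E=E_1\sqcup E_2$ under the block inclusions $\Z^{r_1},\Z^{r_2}\hookrightarrow\Z^{r_1+r_2}$, one checks directly from the definitions that
\[
\Ap(H,E)=\Ap(H_1,E_1)\times\Ap(H_2,E_2),\qquad
\Soc(H,E)=\Soc(H_1,E_1)\times\Soc(H_2,E_2),
\]
that $\leq_H$ is the product of $\leq_{H_1}$ and $\leq_{H_2}$, and that $\type H=\type H_1\cdot\type H_2$. Since $\fd[H]=\fd[H_1]\otimes_\fd\fd[H_2]$, it is Cohen-Macaulay, so Theorem~\ref{simplicialAG} applies to $H$, $H_1$, and $H_2$ alike. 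I would also record, via Theorem~\ref{symp_CM}, the elementary fact that $\Ap(H_i,E_i)=\{0\}$ if and only if $H_i=\N^{r_i}$, and that $0\in\Soc(H_i,E_i)$ exactly in this case (because $0$ is the $\leq_{H_i}$-minimum of the finite poset $\Ap(H_i,E_i)$, hence maximal there only when the poset is a single point).

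For the ``if'' direction I would treat the two cases separately. If $\type H_1=\type H_2=1$, then $\type H=1$, so $\Soc(H,E)$ is a singleton $\{w\}$; since $\Ap(H,E)$ is finite, its unique maximal element $w$ is its $\leq_H$-maximum, so conditions (1) and (2) of Theorem~\ref{simplicialAG} hold vacuously and $H$ is AG. If instead, say, $H_2=\N^{r_2}$ and $H_1$ is AG, then $\Ap(H,E)=\Ap(H_1,E_1)\times\{0\}$ and $\Soc(H,E)=\Soc(H_1,E_1)\times\{0\}$; choosing $w_1$ as provided by Theorem~\ref{simplicialAG} for $H_1$ and setting $w=(w_1,0)$, I would verify that conditions (1) and (2) for $H$ collapse coordinate-wise onto the corresponding conditions for $H_1$, so $H$ is AG.

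The ``only if'' direction is the heart of the argument. Assume $H$ is AG and fix $w=(w_1,w_2)\in\Soc(H_1,E_1)\times\Soc(H_2,E_2)$ satisfying (1) and (2). The decisive dichotomy is whether both Ap\'ery sets are nontrivial. Suppose $\Ap(H_1,E_1)\ne\{0\}$ and $\Ap(H_2,E_2)\ne\{0\}$. Then $0\notin\Soc(H_2,E_2)$, so for every $h_1\in\Ap(H_1,E_1)$ the element $(h_1,0)$ is a non-socle element of $\Ap(H,E)$; by condition (1) it must satisfy $(h_1,0)\leq_H w$, whence $h_1\leq_{H_1}w_1$. As $h_1$ was arbitrary, $w_1$ is the $\leq_{H_1}$-maximum of $\Ap(H_1,E_1)$, forcing $\Soc(H_1,E_1)=\{w_1\}$ and $\type H_1=1$; symmetrically $\type H_2=1$, giving case (i). In the complementary case one factor, say $H_2$, satisfies $\Ap(H_2,E_2)=\{0\}$, i.e.\ $H_2=\N^{r_2}$; then $w=(w_1,0)$ and the same coordinate-wise bookkeeping shows $w_1$ witnesses (1) and (2) for $H_1$, so $H_1$ is AG, giving case (ii).

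The step I expect to be most delicate is the passage between a witness $w$ for $H$ and a witness for a factor, namely ensuring that condition (2) does not leak across the two coordinate blocks: one must check that the extreme ray $b\in E=E_1\sqcup E_2$ appearing in the relation $h+h'=w+b$ lies in the correct block. The decisive observation is that this is an equality in $\Z^{r_1+r_2}$, so comparing $H_2$-coordinates (where all of $h,h',w$ vanish while any $b\in E_2$ would contribute a nonzero entry) pins $b$ to $E_1$. Once this is in place the proof is pure unwinding of the product poset, and no normality or slimness hypothesis is required, since Theorem~\ref{simplicialAG} already absorbs the reduction to orthogonal semigroups through Proposition~\ref{orth_semi2}.
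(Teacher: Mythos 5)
Your proof is correct and follows essentially the same route as the paper: both arguments reduce to Theorem~\ref{simplicialAG} via the product formulas $\Ap(H,E)=\Ap(H_1,E_1)\times\Ap(H_2,E_2)$ and $\Soc(H,E)=\Soc(H_1,E_1)\times\Soc(H_2,E_2)$. The only (minor) difference is in the converse, where the paper applies condition (2) of Theorem~\ref{simplicialAG} to $(h_1,w_2)$ to force $0\in\Soc(H_2,E_2)$, while you split on whether each Ap\'ery set is $\{0\}$ and use condition (1) applied to $(h_1,0)$; both correctly handle the key point that the extreme ray $b$ in $h+h'=w+b$ must lie in the appropriate block of $E_1\sqcup E_2$.
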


\begin{proof}
Put $E=E_1\cup E_2\subset\Z^{r_1+r_2}$.
Note that $\fd[H]$ is Cohen-Macaulay,
\begin{align*}
\Ap(H, E)&=\Ap(H_1, E_1)\times\Ap(H_2, E_2)
\intertext{and}
\Soc(H, E)&=\Soc(H_1, E_1)\times\Soc(H_2, E_2)
\end{align*}
by definition.
Hence, by Theorem~\ref{simplicialAG},
if the type of $H_1$ and that of $H_2$ are one,
or if
$H_1$ is an AG semigroup and $H_2=\N^{r_2}$,
then $H$ is an AG semigroup
(note $\Ap(\N^{r_2}, E_2)=\Soc(\N^{r_2}, E_2)=\{0\}$),
indeed.
Assume that $H$ is an AG semigroup
and that the condition $(1)$ is not satisfied.
Then we may assume $|\Soc(H_1, E_1)|>1$.
By Theorem~\ref{simplicialAG},
there exist $w_1\in\Soc(H_1, E_1)$
and $w_2\in\Soc(H_2, E_2)$
such that $w=(w_1, w_2)$ satisfies 
the conditions of Theorem~\ref{simplicialAG}.
Since $|\Soc(H_1, E_1)|>1$,
there is $h_1\in\Soc(H_1, E_1)$ with $h_1\ne w_1$.
Applying the condition $(2)$ in Theorem~\ref{simplicialAG}
to $(h_1, w_2)\in\Soc(H, E)$,
there exist $h_1'\in\Soc(H_1, E_1)$,
$h_2'\in\Soc(H_2, E_2)$
and $b\in E$
satisfying
\[
(h_1, w_2)+(h_1', h_2')=(w_1, w_2)+b.
\]
Then $b\in E_1$ and $0=h_2'\in\Soc(H_2, E_2)$,
which implies $H_2=\N^{r_2}$ and
$\Soc(H_2, E_2)=\{0\}$.
And $H_1$ is an AG semigroup by Theorem~\ref{simplicialAG}.
\end{proof}

Further assume that $H$ is slim,
for example the case $\rank H_1=\rank H_2=1$.
Then the above corollary says that,
if $\fd[H]$ is almost Gorenstein ring,
then both $\fd[H_1]$ and $\fd[H_2]$ are Gorenstein,
or $\fd[H]$ is the polynomial extension of 
an almost Gorenstein ring
$\fd[H_1]$ or $\fd[H_2]$
(cf. \cite[Theorem 8.5]{GTT}).
This question is also discussed in  \cite{MM}.
Note that $H$ is not necessarily slim,
even when both $H_1$ and $H_2$ are slim.

\begin{ex}
Assume that $H$ is a numerical semigroup.
Then $E=\{h\}$ where $h$ is the minimal positive number in $H$, 
called the multiplicity of $H$.
If $w\in\Soc(H)$ satisfies the conditions (1) and (2) 
in Theorem~\ref{simplicialAG},
then $w=\Fr(H)+h$ where $\Fr(H)$ is the Frobenius number of $H$.
If $f\in\PF(H)\setminus\{\Fr(H)\}$, 
then $f+h\in\Soc(H,E)$ and, by Theorem~\ref{simplicialAG} $(2)$
there exists $f'\in\PF(H)$
\[
(f+h)+(f'+h)=(\Fr(H)+h)+h,
\]
thus $f+f'=\Fr(H)$.
This shows that Theorem~\ref{simplicialAG} 
is an extension of Nari's Theorem (Theorem~\ref{Nari}).
\end{ex}

\begin{ex}[{\cite[Example 4.6]{HJS}}]
Let
\[
H=\begin{pmatrix} 11 & 5 & 4 & 3 \\ 13 & 6 & 5 & 4 \end{pmatrix}
\quad\text{and}\quad
E=\begin{pmatrix} 11 & 3 \\ 13 & 4 \end{pmatrix}.
\]
Then $(H, E)$ is a normal simplicial semigroup
and
\[
2\binom45=\binom56+\binom34
\]
implies that $H$ is an AG semigroup.
Further, $\fd[H]$ is an \AGG ring,
since $\rank H=2$.
\end{ex}

\begin{ex}[{\cite[Example 10.8]{GTT}}]
We consider the Veronese subring of a polynomial ring $\fd[X_1, \dots, X_d]$.
Let $n > 1$ be an integer.
We put $H=\{h\in\N^d \mid \sum_{i=1}^d\sigma_i(h)\in n\Z\}$ and $E=n\N^d$.
Then $(H, E)$ is a simplicial semigroup of rank $d$.
We then have
\begin{align*}
\Ap(H, E)&=\{h\in H \,|\, \sigma_i(h)<n\text{ for each } 1\le i \le d\}\\
\Soc(H, E)&=\left\{h\in\Ap(H, E) \,\middle|\, 
\sum_{i=1}^d\sigma_i(h)=d \left[\frac{(d-1)n}{d}\right]\right\},
\end{align*}
where the bracket means the Gaussian symbol.
If $H$ is an AG semigroup with $|\Soc(H, E)|>1$,
then $2\deg h=\deg h+n$ for every $h\in\Soc(H, E)$
by Theorem~\ref{simplicialAG},
and
\[
n=\deg h>nd-n-d,
\]
thus $(n-1)(d-2)<2$,
namely either $d\leq2$ or both $d=3$ and $n=2$.
The converse is also true, namely, $d \leq 2$ or both $d=3$ and $n=2$, then $H$ is an AG semigroup.
We note that $|\Soc(H, E)|=1$ if and only if $n$ divides $d$, 
and as is well-known 
that this is also equivalent to the Gorensteinness of $k[H]$.
Since $\fd[H]$ has a standard grading
in this case,
$H$ is an AG semigroup 
if and only if $\fd[H]$ is an \AGG ring
(see Remark~\ref{orth_semi3}),
thus we obtain the same result as \cite[Example 10.8]{GTT}.
\end{ex}

Thanks to Theorem~\ref{simplicialAG},
we can extend \cite[Theorem 4.3]{HJS} as follows.

\begin{cor}\label{normalAG}
Let $(H,E)$ be a normal simplicial semigroup. 
Assume that $\omega_H = \relint H$ is 
minimally generated by $a_1, \dots, a_s$, where $s > 0$. 
Then $H$ is an AG semigroup 
if and only if 
there exists $1 \le i \le s$ 
such that, for any $j\ne i, 1 \le j \le s$, 
there are 
$1 \le j'\le s$ and $b \in E$
satisfying $a_j + a_{j'} = a_i + \sum_{h \in E \setminus \{b\}} h$. 
Under these conditions, $a_i$ is an Ulrich element of $H$.

In particular, if $\rank H =2$, $a_i$ is an Ulrich element 
if and only if for every $j \ne i, 1\le j \le s$,
one can find $1 \le j' \le s$ and $b \in E$ 
such that $a_j + a_{j'} = a_i + b$.
\end{cor}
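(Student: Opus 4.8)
The plan is to derive Corollary~\ref{normalAG} as a direct translation of Theorem~\ref{simplicialAG} into the language of minimal generators of $\omega_H=\relint H$, exploiting the normality of $H$ to pass freely between $\Soc(H,E)$ and the generators $a_1,\dots,a_s$. First I would fix the orthogonal model of order $m$ (permissible by Proposition~\ref{orth_semi2}), write $E=\{m\e_1,\dots,m\e_d\}$, and set up the bijection between socle elements and minimal generators of the canonical ideal. Concretely, for $w\in\Soc(H,E)$ the element $-w+\sum_{b\in E}b$ is a minimal generator of $\omega_H$, and by normality (Proposition~\ref{normal}(3)) we have $w+w^{\vee}=\sum_{i\in\supp w}m\e_i$, which makes this correspondence $w\leftrightarrow a=-w+\sum_{b\in E}b$ a bijection between $\Soc(H,E)$ and $\{a_1,\dots,a_s\}$; in particular $s=\type H$. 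I expect the main bookkeeping to be tracking how the two conditions of Theorem~\ref{simplicialAG} transform under this substitution.

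Next I would rewrite condition (2) of Theorem~\ref{simplicialAG}. Suppose $w\in\Soc(H,E)$ is the distinguished socle element and $a_i=-w+\sum_{b\in E}b$ the corresponding generator. For $h\in\Soc(H,E)\setminus\{w\}$, writing $h\leftrightarrow a_j$ and $h'\leftrightarrow a_{j'}$, the relation $h+h'=w+b$ becomes, after substituting $h=-a_j+\sum_{c\in E}c$ and similarly for $h',w$,
\[
a_j+a_{j'}=a_i+\sum_{c\in E}c-b=a_i+\sum_{h\in E\setminus\{b\}}h,
\]
which is exactly the stated condition. The passage uses that each $h\in\Soc$ is recovered as $\sum_{c\in E}c$ minus its matching generator, a consequence of the socle–canonical bijection established in the first step. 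I would verify that the quantifiers match: condition (2) of Theorem~\ref{simplicialAG} ranges over all $h\in\Soc(H,E)\setminus\{w\}$, equivalently over all $j\ne i$, and asserts existence of $h',b$, equivalently of $j',b$, so the translation is faithful.

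The subtler point is condition (1) of Theorem~\ref{simplicialAG}, which demands that the elements of $\Ap(H,E)$ not dominated by $w$ coincide exactly with $\Soc(H,E)\setminus\{w\}$. The plan is to show that, for \emph{normal} $H$, once condition (2) holds for the maximal-degree socle element $w$, condition (1) is automatic, so that it need not appear explicitly in the corollary's statement. I would argue this via the degree comparison: choosing $w$ of maximal degree in $\Soc(H,E)$ and using Proposition~\ref{Hilbert}(1)(b)–(c), every $h$ with $h\not\leq_H w$ contributes $c_{w,h}\ge1$, while condition (2) together with normality forces $c_{w,h}=1$ precisely on the socle elements and forces any non-socle $h\not\leq_H w$ to be absent. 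This is where I expect the real work to lie, since it requires knowing that in the normal case the relation $h+h'=w+b$ with $h'\in\Soc$ cannot hold for a non-socle $h\not\leq_Hw$; I would lean on the normality characterization (Proposition~\ref{normal}) to control the Ap\'ery set.

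Finally, the rank-two specialization is immediate: when $d=\rank H=2$, the sum $\sum_{h\in E\setminus\{b\}}h$ runs over a single extreme ray, so $\sum_{h\in E\setminus\{b\}}h=b'$ for the other ray, and relabeling $b'$ as $b$ gives the relation $a_j+a_{j'}=a_i+b$. I would note that in rank two $H$ is automatically slim (Remark~\ref{rem_slim}), so the Ulrich element genuinely witnesses the \AGG property, and record that $a_i=-w+\sum_{b\in E}b$ is the asserted Ulrich element by the last sentence of Theorem~\ref{simplicialAG}.
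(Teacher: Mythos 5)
Your proposal follows essentially the same route as the paper: the paper's entire proof is the observation that, by normality, each minimal generator of $\omega_H$ has the form $a_i=\sum_{b\in E}b-h$ for some $h\in\Soc(H,E)$, followed by ``the assertion follows from Theorem~\ref{simplicialAG}.'' Your first two steps (the bijection $w\leftrightarrow\sum_{b\in E}b-w$ between $\Soc(H,E)$ and $\{a_1,\dots,a_s\}$, and the substitution turning $h+h'=w+b$ into $a_j+a_{j'}=a_i+\sum_{h\in E\setminus\{b\}}h$) are exactly this translation carried out explicitly, and the rank-two specialization is the same immediate observation. The one point where you go beyond the paper is your third step: you correctly notice that the displayed condition of the corollary is only the translation of condition (2) of Theorem~\ref{simplicialAG}, so condition (1) --- that every $h\in\Ap(H,E)$ with $h\not\leq_H w$ lies in $\Soc(H,E)$ --- must also be accounted for. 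Your plan to show that, for normal $H$, condition (1) is forced by condition (2) is left as a sketch and is not obviously valid: the natural attempt (take a non-socle $h\leq_H w'$ with $w'\in\Soc(H,E)\setminus\{w\}$, apply $w'+w''=w+m\e_k$, and try to conclude $w-h\in H$) reduces to a coordinate inequality in the $k$-th entry, namely $\sigma_k(h)\leq\sigma_k(w')-(m-\sigma_k(w''))$, which does not follow formally. Note, however, that the paper's own two-sentence proof is silent on this point as well, so you have not diverged from it --- you have merely isolated the one step of the corollary that is not pure bookkeeping and left it, as the paper does, without a complete argument.
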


\begin{proof}
Since $H$ is normal,
for each $1 \le i\le s$,
there exists $h\in\Soc(H, E)$
such that $a_i=\sum_{b\in E}b-h$.
From this, the assertion follows from Theorem~\ref{simplicialAG}.
\end{proof}

\subsection{Examples}
Let us show some concrete examples.

\begin{ex}
Let
\[
H=\begin{pmatrix} 5 & 0 & 0 & 2 & 1  \\ 0 & 5 & 0 & 1 & 3 \\
0 & 0 & 5 & 1 & 3\end{pmatrix}
\quad\text{and}\quad
E=\begin{pmatrix} 5 & 0 & 0 \\ 0 & 5 & 0 \\ 0 & 0 & 5 \end{pmatrix}.
\]
Then $(H, E)$ is a normal simplicial semigroup
and
\[
2\begin{pmatrix}1 \\ 3 \\ 3\end{pmatrix}=
\begin{pmatrix}2 \\ 1 \\ 1\end{pmatrix}
+\begin{pmatrix}0 \\ 5 \\ 0\end{pmatrix}
+\begin{pmatrix}0 \\ 0 \\ 5\end{pmatrix}
\]
implies that $H$ is an AG semigroup.
Furthermore, $\fd[H]$ is an \AGG ring,
since $H$ is slim.
\end{ex}

\begin{thm}\label{cyc_invariant}
Let $n>m_1$ be coprime positive integers and define $H \subset \N^2$ by  
\[ 
H = \left\{\binom{u_1}{u_2} 
~\middle|~  \;  u_1 + m_1u_2 \equiv 0  \pmod n\right\}.
\]

We choose $m_2<n$ with $m_1m_2\equiv1\pmod n$ and put 
\[c=\dfrac{m_1m_2-1}n\in\N.\]

Then the following conditions are equivalent.
\begin{enumerate}
\item $H$ is an AG semigroup.
\item  $m_1\equiv m_2\equiv1\pmod c$.
\item The  Hirzebruch-Jung continued fraction expansion 
of $\frac n{m_1}$ (for the definition, see \cite{NY}) is
\[
\dfrac n{m_1} = [[q+1, \underbrace{2, \dots, 2}_{c-1}, p+1]],
\]
where $2$ appears continuously $c-1$ times.
\end{enumerate}

If, moreover, $m_1 >1$ and conditions (1) - (3) are satisfied, then $\relint(H)$ 
has unique element $v$ with $\deg(v) < \deg (h)$ for any $h \in H, h\ne 0$, 
which is an Ulrich element of $H$.

Note that if $k$ has  a primitive $n$-th root of unity $\xi$,
then $\fd[H]=\fd[t_1, t_2]^G$ is the invariant subring of a cyclic group 
$G\subset GL(2,k)$ of order $n$     
generated by 
$\begin{pmatrix}\xi & 0 \\ 0 & \xi^{m_1}\end{pmatrix}$. 
\end{thm}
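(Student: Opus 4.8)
The plan is to establish the chain of equivalences $(1)\Leftrightarrow(2)\Leftrightarrow(3)$ by computing the relevant combinatorial data of the semigroup $H$ and then invoking the criterion in Theorem~\ref{simplicialAG} (or its normal-case corollary, Corollary~\ref{normalAG}). First I would observe that $H$ is a normal simplicial semigroup of rank $2$, so it is automatically slim by Remark~\ref{rem_slim}, and the canonical module is given by $\omega_H=\relint H$ as noted after Definition~\ref{canonical}. The extreme rays $E$ can be read off from the congruence $u_1+m_1u_2\equiv0\pmod n$: the vectors $\binom{n}{0}$ and $\binom{0}{n/\gcd(m_1,n)}=\binom0n$ (since $\gcd(m_1,n)=1$) generate the rational cone. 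The key preliminary computation is to describe $\Ap(H,E)$, $\Soc(H,E)$, and the minimal generators $a_1,\dots,a_s$ of $\relint H$ explicitly in terms of the Hirzebruch--Jung data; this is where the continued fraction enters, since the minimal generators of $\relint H$ for a cyclic quotient singularity of type $\frac{1}{n}(1,m_1)$ are exactly encoded by the expansion of $\frac{n}{m_1}$.

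Next I would carry out the arithmetic linking the three conditions. The equivalence $(2)\Leftrightarrow(3)$ is essentially a statement about the Hirzebruch--Jung continued fraction of $\frac{n}{m_1}$: I would show that the expansion has the special shape $[[q+1,\underbrace{2,\dots,2}_{c-1},p+1]]$ precisely when $m_1\equiv m_2\equiv1\pmod c$, using the recursive structure of the continued fraction together with the reciprocity between $m_1$ and its inverse $m_2$ (the two endpoints $q+1$ and $p+1$ correspond to $m_1,m_2$ and the middle string of $2$'s has length governed by $c=\frac{m_1m_2-1}{n}$). For $(1)\Leftrightarrow(2)$ (equivalently $(1)\Leftrightarrow(3)$), I would feed the explicit generators $a_1,\dots,a_s$ of $\relint H$ into the criterion of Corollary~\ref{normalAG} for rank $2$: the AG property amounts to finding an index $i$ such that for every $j\ne i$ there are $j'$ and $b\in E$ with $a_j+a_{j'}=a_i+b$. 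Translating this additive symmetry of the generators into the continued fraction $2$'s (each interior entry equal to $2$ being exactly the Gorenstein/almost-symmetric relation $a_{j-1}+a_{j+1}=a_j+(\text{ray})$) yields the desired shape, with the chosen $a_i$ being the generator of smallest degree.

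For the final assertion (uniqueness of the minimal-degree element $v\in\relint H$ when $m_1>1$) I would argue that among the generators $a_1,\dots,a_s$ there is a unique one of strictly minimal degree under the standard grading $\deg\binom{u_1}{u_2}=u_1+u_2$, coming from the fact that the interior lattice points nearest the origin along the two bounding rays are distinct unless $m_1=1$; combining this with Corollary~\ref{normalAG} identifies that $a_i=v$ is the Ulrich element. The main obstacle I expect is the $(2)\Leftrightarrow(3)$ step: correctly matching the recursive Hirzebruch--Jung algorithm to the congruence conditions on $m_1,m_2$ modulo $c$ requires careful bookkeeping of how the string of $2$'s is produced and how the two end-terms encode the residues, and one must verify that $c$ (defined as $\frac{m_1m_2-1}{n}$) really equals the number of interior continued-fraction steps plus one. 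The remark about the invariant subring $\fd[t_1,t_2]^G$ is just the standard identification of cyclic quotient singularities with these congruence semigroups and needs only a brief justification via the eigenbasis of the diagonal generator of $G$.
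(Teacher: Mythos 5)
Your skeleton matches the paper's: reduce to the normal, rank--$2$ (hence slim) orthogonal situation, identify $\omega_H=\relint H$, and test the minimal generators of $\relint H$ against the pairing criterion of Corollary~\ref{normalAG}. But the crux of the argument is exactly the step you outsource to ``the Hirzebruch--Jung data,'' and as stated it has two genuine gaps. First, the assertion that the minimal generators of $\relint H$ are ``exactly encoded by the expansion of $\frac{n}{m_1}$'' is not something you can quote: the expansion of $n/m_1$ encodes the resolution graph, and the module generators of $\relint H$ are a different (and here a priori unknown) set. The paper proves in its place a Claim: $v=\binom pq$ is an Ulrich element if and only if the minimal generating set $B_H$ of $\omega_H$ is precisely $\bigl\{\binom{u_1}{u_2}\in\Ap(H,E) : 0<u_1\le p\ \text{or}\ 0<u_2\le q\bigr\}$ with $|B_H|=p+q-1$; the forward direction needs a real induction (if some Ap\'ery element $a_i$ with first coordinate $i<p$ were not a generator, the pairing $a_{i'}+a_{p-i'}=v+n\e_2$ forces a previously-established generator $a_{p-i'}$ to decompose, a contradiction). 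Second, the relation you attach to the interior entries $2$, namely $a_{j-1}+a_{j+1}=a_j+(\text{ray})$, is not the condition of Corollary~\ref{normalAG}: what is required is the \emph{global} pairing $a_j+a_{j'}=a_i+b$ about one distinguished generator $a_i$, which for this $H$ amounts to the generators forming two arithmetic progressions with common differences $\binom{1}{-m_2}$ and $\binom{-m_1}{1}$ meeting at $v$ (so the local relation, where it holds, is $a_{j-1}+a_{j+1}=2a_j$ with no ray term). Bridging from local three-term relations to this global symmetry is not automatic and is not done.

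You have also misplaced the difficulty: the arithmetic bridge from the generator description to condition (2) --- in the paper, $\sigma_2(a_i)=n-im_2$ gives $q=n-pm_2$ and $p=n-qm_1$, hence $p(m_2-1)=q(m_1-1)$ and finally $m_1=pc+1$, $m_2=qc+1$, $n=pqc+p+q$ --- is absent from your plan, while the step you flag as the main obstacle, $(2)\Leftrightarrow(3)$, is a short direct computation once those identities are in hand ($n/m_1=q+p/(pc+1)$, and the remainders descend through $(p(c-k)+1)/(p(c-k-1)+1)$, producing exactly $c-1$ twos before terminating in $p+1$). For the final assertion, the paper does not compare generators along the two rays: it deduces $\deg v<\deg u$ for every $u\in\Soc(H)\setminus\{v\}$ directly from $u+w=v+n\e_i$ together with $\deg u<n$ for socle elements when $m_1>1$; your version would still have to compare $v$ against all nonzero elements of $H$, not merely against the other minimal generators.
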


\begin{proof} 
It is obvious that $(H, E)$ is orthogonal of order $n$, where 
$E=\{n\e_1, n\e_2\}$. 

Let $B_H\subset\omega_H=\relint H$
be the set of minimal generating system of $\omega_H$.
Note that $H$ is normal (cf. Proposition~\ref{normal})
and that $B_H$
is contained in
\[
\Ap(H, E)
=\left\{\binom{u_1}{u_2}\in H  ~\middle|~ 0\leq u_1<n, 0\leq u_2<n\right\}\]
with $|\Ap(H, E)|=n$. The following Claim is the key of our proof. 

\bigskip

{\bf Claim.}  $v=\binom pq$ is an Ulrich element
if and only if 
\[
B_H=\left\{\binom{u_1}{u_2}\in\Ap(H, E) ~\middle|~
0<u_1\leq p\text{ or }0<u_2\leq q
\right\}
\tag{$*$}
\]
and $|B_H|=p+q-1$.

To prove the Claim, first note that, since $H$ is normal, 
$\binom{u_1}{u_2}\notin B_H$ if $u_1\geq p$ and $u_2\geq q$,  
since $\binom{u_1}{u_2}-v\in H$.
 
Assume that $v$ is an Ulrich element
and $p>1$.
For $i=1, \dots, p-1$,
we denote the element in $\Ap(H, E)$
whose first entry is $i$
by $a_i$.
Suppose that
there exists $i$ with $a_i\notin B_H$.
Then $i>1$, since $a_1\in B_H$, and we may assume
$a_{i'}\in B_H$ for $i'=1, \dots, i-1$.
Since $a_i\notin B_H$,
there is $w\in\Ap(H, E)$ with $w\leq_Ha_i$.
From $\sigma_1(w)<\sigma_1(a_i)=i$,
it follows that
there is $i'<i$ with $w=a_{i'}$. 
Since $v$ is an Ulrich element, there exists $u \in B_H$ 
with  $a_{i'}+u =v+b$ for some $u\in E$ by Corollary~\ref{normalAG}.
Looking at the first component, we see that $u = a_{p-i'}$ and 
 $a_{i'}+a_{p-i'}=v+n\e_2$. It is easy to see that we have also 
 $a_i + a_{p-i} = v + \e_2 = a_{i'}+a_{p-i'}$. 
Then
\[
a_{p-i'}=v+n\e_2-a_{i'}=a_i+a_{p-i}-a_{i'}
=(a_i-a_{i'})+a_{p-i}
\]
implies $a_{p-i'}\notin B_H$, 
contradicting our assertion 
$a_{p-i'}\in B_H$.
Hence such $i$ does not exist
and $a_i\in B_H$ for $i=1, \dots, p-1$.
Similarly,
$\binom{u_1}{u_2}\in\Ap(H, E)$ is contained in $B_H$,
if $0<u_2<q$.
Hence $(*)$ holds.
Conversely, if $(*)$ holds,
$v$ is an Ulrich element by Corollary~\ref{normalAG}
and the claim is proved.

Assume that $v=\binom pq$ is an Ulrich element.
Then 
$a_1=\binom1{n-m_2}$ and
$q=\sigma_2(v)<\sigma_2(a_{p-1})<\cdots<\sigma_2(a_1)=n-m_2$.
Since $\sigma_2(a_i)\equiv i\sigma_2(a_1)\pmod n$,
we have $\sigma_2(a_i)=n-im_2$ for $i=1, \dots, p-1$ 
and $q=\sigma_2(v)=n-pm_2$.
Similarly, we have $p=n-qm_1$,
thus $p(m_2-1)=q(m_1-1)$.
Hence $m_1-1=p'c'$ and $m_2-1=q'c'$
where $p'=p/g$, $q'=q/g$,
$c'\in\N$ and $g=\gcd(p, q)$.
And $n=p'qc'+p+q$
Note that $g$ divides $n$
and 
\[
nc'/g=(p'c'+1)(q'c'+1)-1=m_1m_2-1.
\]
Since $m_1m_2-1=nc$,
we have $c'=cg$, thus
\[
m_1=p'c'+1=pc+1
\qquad\text{and}\qquad
m_2=qc+1.
\]
Hence $m_1\equiv m_2\equiv1\pmod c$.
Conversely, assume
$m_1=pc+1$ and $m_2=qc+1$
where $p, q\in\N$.
Then $n=pqc+p+q$
and $(*)$ holds,
thus $v=\binom{p}{q}$ is an Ulrich element.
In this case, the Hirzebruch-Jung continued fraction expansion 
of $\frac n{m_1}$ is directly computed as described.

Finally, if $m_1>1$,
then $\sigma_1(u)+\sigma_2(u)<n$ for $u\in\Soc(H)$.
If $v$ is an Ulrich element
and $u\in\Soc(H)\setminus\{v\}$,
then there are $w\in\Soc(H)$ and $n\e_i\in E$
with $u+w=v+n\e_i$.
Then
\[
\sigma_1(v)+\sigma_2(v)
=\sigma_1(u+w)+\sigma_2(u+w)-n<\sigma_1(u)+\sigma_2(u),
\]
which implies the uniqueness of $v$.
This completes the proof.
\end{proof}

\begin{ex}
In Theorem~\ref{cyc_invariant},
if $n=7$,
then
\[
6\cdot6\equiv5\cdot3\equiv4\cdot2\equiv1\pmod7.
\]
If we take $m_1=6$ (resp. $5$ or $4$),
then $c=5$ (resp. $2$ or $1$)
and hence $m_1\equiv m_2\equiv1\pmod c$ in any case.
Therefore, $H$ has an Ulrich element.

If we choose $n=11$,
then
\[
10\cdot10\equiv9\cdot5\equiv8\cdot7\equiv6\cdot2\equiv4\cdot3
\equiv1\pmod{11}.
\]
If $m_1=10$ (resp. $9, 8, 6$ or $4$),
then $c=9$ (resp. $4, 5, 1$ or $1$).
Hence, $H$ has an Ulrich element unless $m_1=8$ or $7$.
\end{ex}

Next, we observe the case in which semigroups are not normal.

\begin{rem}\label{rem_proj}
Let $(H, E)$ be an orthogonal semigroup of order $m$ and rank $d$.
Put $H_i=\{ \sigma_i(h) \,|\, h\in H\}$ for each $1 \le i \le d$.
If $H$ is an AG semigroup of type two,
then $H_i$ is an AG semigroup of type at most two for every $1 \le i \le d$.
\end{rem}

\begin{proof}
Suppose that $H$ is an AG semigroup of type two 
and write $\Soc(H,E) = \{u,v\}$.
Then, by Theorem~\ref{simplicialAG}, 
we may assume $v + m \e_i = 2u$ for some $1 \le i \le d$ 
and $w \leq_H v$ for every $w \in \Ap(H,E) \setminus\{u\}$. 
We also may assume $i=1$ by switching the rows of $H$ suitably.
Then we have $v + m \e_1 = 2u$.
For each $c \in \Ap(H_i, \{m\})$, 
there exists $h \in \Ap(H,E)$ such that $\sigma_i(H) = c$ by Theorem~\ref{symp_CM}.
Moreover, 
since $|\Ap(H, E)|=|\Ap(H_i, \{m\})|=m$ 
(\cite[Theorem 4.6]{CN}),
the correspondence $c \mapsto h$ is one-to-one.
It follows that
$H_i$ is a symmetric numerical semigroup
with $\Soc(H_i, \{m\})=\{\sigma_i(v)\}$ for every $1 < i \le d$, i.e., $\type H_i = 1$.
Suppose that $H_1$ is not $\langle1\rangle = \N$.
Then $\sigma_1(v)>\sigma_1(u) > m$ implies that $\sigma_1(v)$ is maximal in $\Ap(H_1, \{m\})$.
Since $\sigma_1(u)\not\leq_H\sigma_1(v)$,
$H_1$ is a pseudo-symmetric numerical semigroup, which is equivalent to being an AG semigroup of type two.
\end{proof}

We consider some examples to illustrate the meaning of Remark \ref{rem_proj}.
In the following example, both $H$ define projective monomial curves.

\begin{ex}
Let $H=\begin{pmatrix}0&4&5&7\\ 7&3&2&0\end{pmatrix}$.
Then $\Soc(H, E)=\left\{\binom{13}8, \binom{16}4\right\}$ 
and 
$2\binom{10}4=\binom{13}8+\binom70$ implies
that $H$ is an AG semigroup.
In this example, $H_1 = \langle 4,5,7\rangle$ is pseudo-symmetric and $H_2 = \langle 2,3,7\rangle = \langle 2,3\rangle$ is symmetric.\par
On the other hand, it may happen that every $H_i$ is of type one.
For example,
let $H=\begin{pmatrix}0&1&3&7\\ 7&6&4&0\end{pmatrix}$
and $E=\begin{pmatrix}0&7\\ 7&0\end{pmatrix}$.
Then $\Soc(H, E)=\left\{\binom68, \binom5{16}\right\}$
and $2\binom68=\binom5{16}+\binom70$,
thus, $H$ is an AG semigroup.
Notice that both $H_1 = \langle 1,3,7\rangle = \langle 1 \rangle$ and $H_2 = \langle 4,6,7\rangle$ are symmetric, namely, of type one.
\end{ex}

The next example shows that $H_i$ needs not to be an AG semigroup, even if $H$ is an AG semigroup, unless the type of $H$ is at most two.

\begin{ex}
Let
\[
E=\begin{pmatrix} 8 & 0 \\ 0 & 8 \end{pmatrix}
\text{ and }
H=\begin{pmatrix}
8 & 0 & 9 & 22 & 31  \\
0 & 8 & 7 & 18 & 17 
\end{pmatrix}.
\]
Then
\[
\Ap(H, E)
=\left\{
\binom{9a}{7a} \,\middle|\, a=0, \dots,  5\right\}
\cup\left\{
\binom{22}{18}, \binom{31}{17}\right\},
\]
\[
\Soc(H, E) = \left\{\binom{22}{18}, \binom{31}{17}, \binom{45}{35}\right\},
\]
and $\fd[H]$ is Cohen-Macaulay.
Since
\[
\binom{22}{18}+\binom{31}{17}
=\binom{45}{35}+\binom80,
\]
$H$ is an AG semigroup of type three.
On the other hand,
$H_1=\langle8, 9, 22\rangle$ is not an AG semigroup
since $\Soc(H,\{8\}) = \{31,45\}$ and $31+31\ne45+8$
(note that $H_2=\langle7, 8, 17, 18\rangle$ is symmetric).
\end{ex}

\begin{ex}\label{notmonomial}
Let $H=\begin{pmatrix} 0 & 1 & 3 & 8 \\ 8 & 3 & 1 & 0\end{pmatrix}$.
Then
\[
\Soc(H, E)=\left\{
\binom57, \binom75
\right\},
\]
$2\binom57-\binom75 = \binom{3}{9}\notin E$, and
$2\binom75-\binom57  = \binom{9}{3}\notin E$
imply that $H$ is not an AG semigroup.
However, $R=\fd[H]$ is an \AGG ring.
First, notice that $\omega_H=\relint H$ since $H$ is normal
Let $f=t^{\binom13}-t^{\binom31}\in\K_R$. 
Then, $f$ is a homogeneous element of $R$ of degree $4$.
Moreover, we can easy to see that
 \[
\dim_{\fd}(\K_R/fR)_n=
\begin{cases}1 & \text{if $n$ is a multiple of $4$},\\
0 & \text{otherwise}.
\end{cases}
\]
for all $n \in \Z$. Since $(t^{\binom13})$ is a reduction of $\mm$ with respect to $\K_R/fR$,
\[
\mult(\K_R/fR)
=\lim_{t\to1}(1-t^4)(t^4+t^8+\cdots)=1=\type H-1.
\]
Therefore, $R$ is an \AGG ring.
This example certifies that
$H$ is not always an AG semigroup,
even if $\fd[H]$ is an \AGG ring.
\end{ex}

\end{document}